\documentclass[11pt,a4paper]{amsart}
\pdfoutput=1
\usepackage{graphicx}
\usepackage{dsfont}
\usepackage[lite,initials,msc-links]{amsrefs}

\newtheorem{theorem}{Theorem}[section]
\newtheorem{corollary}[theorem]{Corollary}
\newtheorem{proposition}[theorem]{Proposition}
\newtheorem{lemma}[theorem]{Lemma}

\theoremstyle{definition}

\newtheorem{remark}[theorem]{Remark}

\numberwithin{equation}{section}

\newcommand{\supf}{0.515428}
\newcommand{\inff}{0.477449}

\let\P\relax											% undefines \P
							% variance
\DeclareMathOperator{\P}{P}								% probability
\DeclareMathOperator{\E}{E}								% expectation
\DeclareMathOperator{\I}{\mathds{1}}					% indicator

\newcommand{\delims}[4]{\mathopen#1#2#4\mathclose#1#3}
\newcommand{\abs}[2][]{\delims{#1}\lvert\rvert{#2}}		% absolute value
\newcommand{\card}[2][]{\delims{#1}\lvert\rvert{#2}}	% cardinality
\newcommand{\floor}[2][]{\delims{#1}\lfloor\rfloor{#2}}	% floor
\newcommand{\ceil}[2][]{\delims{#1}\lceil\rceil{#2}}	% ceiling

\newcommand{\N}{\mathds{N}}								% natural numbers
								% rational numbers
\newcommand{\R}{\mathds{R}}								% real numbers
\newcommand{\Z}{\mathds{Z}}								% integers

\newcommand{\Mathematica}{\textit{Mathematica}}			% Mathematica

\begin{document}

\title{The asymptotics of group Russian roulette}
\author{Tim van de Brug}
\author{Wouter Kager}
\author{Ronald Meester}
\date{\today}
\address{Vrije Universiteit\\
Faculty of Sciences\\
Department of Mathematics\\
De Boelelaan 1081a\\
1081\,HV Amsterdam\\
The Netherlands}
\email{\char`{t.vande.brug,w.kager,r.w.j.meester\char`}\,@\,vu.nl}

\begin{abstract}
	We study the group Russian roulette problem, also known as the shooting 
	problem, defined as follows. We have $n$~armed people in a room. At each 
	chime of a clock, everyone shoots a random other person. The persons shot 
	fall dead and the survivors shoot again at the next chime. Eventually, 
	either everyone is dead or there is a single survivor. We prove that the 
	probability~$p_n$ of having no survivors does not converge as $n\to 
	\infty$, and becomes asymptotically periodic and continuous on the 
	$\log{n}$~scale, with period~1.
\end{abstract}

\keywords{Group Russian roulette, shooting problem, non-convergence, coupling, 
asymptotic periodicity and continuity}
\subjclass[2010]{Primary 60J10; secondary 60F99}

\maketitle

\section{Introduction and main result}
\label{sec:introduction}

In~\cite{Winkler}, Peter Winkler describes the following probability puzzle, 
called group Russian roulette, and also known as the shooting problem. We 
start at time $t=0$ with $n$~people in a room, all carrying a gun. At time 
$t=1$, all people in the room shoot a randomly chosen person in the room; it 
is possible that two people shoot each other, but no one can shoot him- or 
herself. We assume that every shot instantly kills the person shot at. After 
this first shooting round, a random number of people have survived, and at 
time $t=2$ we repeat the procedure with all survivors. Continuing like this, 
eventually we will reach a state with either no survivors, or exactly one 
survivor. Denote by~$p_n$ the probability that eventually there are no 
survivors. We are interested in the behavior of~$p_n$ as $n\to \infty$.

Observe that the probability that a given person survives the first shooting 
round is $( 1-(n-1)^{-1} )^{n-1} \approx 1/e$, so that the expected number of 
survivors of the first round is approximately~$n/e$. This fact motivates us to 
plot $p_n$ against~$\log{n}$, see Figure~\ref{figure1} below. 
Figure~\ref{figure1} suggests that $p_n$ does not converge as $n\to \infty$, 
and becomes asymptotically periodic on the~$\log{n}$ scale, with period~1. 
This turns out to be correct, and is perhaps surprising. One may have 
anticipated that, as $n$ gets very large, the fluctuations at every round will 
somehow make the process forget its starting point, but this is not the case. 
Indeed, here we prove the following:

\begin{theorem}
	\label{thm:main}
	There exists a continuous, periodic function $f\colon \R\to [0,1]$ of 
	period~1, satisfying $\sup f\geq \supf$ and $\inf f\leq \inff$, such 
	that
	\[
		\sup_{x\geq x_0} \, \abs[\big]{ p_{\floor{\exp x}} - f(x) } \to 0
		\qquad \text{as $x_0\to\infty$}.
	\]
\end{theorem}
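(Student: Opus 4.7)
My strategy splits into establishing two regularity properties of the sequence $(p_n)$, from which $f$ can be assembled as a limit on the $\log n$ scale.

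The first ingredient is an \emph{approximate one-step periodicity}. By the Markov property, $p_n = \E[p_{S_n}]$, where $S_n$ denotes the number of survivors of the first round and we set $p_0 = 1$, $p_1 = 0$. A short computation gives $\E[S_n] = n\,(1 - 1/(n-1))^{n-1} \sim n/e$ and $\Var(S_n) = O(n)$, so by Chebyshev $\log S_n = \log n - 1 + o_{\P}(1)$. Granted the local continuity statement below, this recursion yields $p_n = p_{\lfloor n/e \rfloor} + o(1)$; equivalently, with $q(x) := p_{\lfloor e^x \rfloor}$, one has $q(x) - q(x-1) \to 0$ as $x \to \infty$.

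The second, and main, ingredient is \emph{local continuity}: if $n_k, m_k \to \infty$ with $m_k/n_k \to 1$, then $p_{n_k} - p_{m_k} \to 0$. I would prove this by coupling the two shooting processes (on $n$ and on $m$ people), arranging that, round by round, the targets chosen by the people common to both processes agree whenever possible, and tracking the symmetric difference of the two surviving populations. Because each round contracts the population by a factor close to $1/e$, a small \emph{relative} discrepancy should remain small, so that with high probability the two coupled processes end in the same outcome (both all dead, or both one survivor). Carrying this out through the $\Theta(\log n)$ rounds of the process without letting the discrepancy amplify is the main obstacle, and probably forces one to analyse not just the population counts but also which individuals differ between the two copies.

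Granted these two ingredients, the theorem follows by extracting $f$ as a limit. Approximate periodicity and local continuity together show that the family of shifted functions $x \mapsto q(x+k)$, for $k\in\N$, is uniformly Cauchy on $[x_0, \infty)$ for every fixed $x_0$; the uniform limit $f$ is automatically continuous and $1$-periodic, and the convergence statement in the theorem is just the uniform Cauchy property. Finally, the explicit bounds $\sup f \geq \supf$ and $\inf f \leq \inff$ I would obtain by computing $p_n$ exactly (via the recursion, with all errors controlled) for specific moderate $n$ chosen so that $\log n \bmod 1$ is near the apparent argmax and argmin of $f$, and then transferring these values to $f$ via the uniform convergence.
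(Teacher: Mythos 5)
Your high-level decomposition — a one-step shift identity on the $\log n$ scale, plus local continuity of $n\mapsto p_n$, from which $f$ is extracted as a limit of shifts — is in the right spirit, but as stated it has two genuine gaps.

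First, the passage from your two ingredients to the uniform Cauchy property is not automatic. ``Approximate one-step periodicity'' ($q(x)-q(x-1)\to0$) together with local continuity of $q$ does \emph{not} imply that the shifts $x\mapsto q(x+k)$ converge: take $q(x)=\sin(\sqrt{x})$, which is Lipschitz on $[1,\infty)$ (so both your ingredients hold), yet $q(x+k)-q(x+l)$ does not go to $0$. What is missing is a summable \emph{rate} in the one-step statement, or a direct $k$-step argument. The paper proves the $k$-step statement directly: it shows that two coupled shooting processes started from $\alpha=\lfloor e^{k+u}\rfloor$ and $\beta=\lfloor e^{l+v}\rfloor$ \emph{collide} (become equal) with probability $\geq 1-\varepsilon$ before reaching $0$ or $1$, uniformly for $k,l\geq k_0$ and $|u-v|\leq\delta$; this simultaneously yields periodicity and continuity, and is immune to the accumulation problem.

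Second, the local continuity ingredient — the heart of the whole theorem — is only sketched, and the coupling you propose (match the targets of the common shooters and track the symmetric difference of the surviving sets) has a real obstacle that you name but do not resolve. If $a<b\leq a+a^{2/3}$, the fluctuations of $S_n$ around $n/e$ are of order $\sqrt{n}$, and a rough variance computation for the iterated process shows that the \emph{relative} discrepancy between the two coupled populations tends to grow as the populations shrink; so an argument that merely tries to keep the symmetric difference small relative to the population cannot, by itself, reach a conclusion when the population has dropped to $O(1)$. The paper avoids this entirely: it replaces the person-level coupling by a coupling of the population counts only, via the empty-boxes chains $Y_n\leq S_n\leq Z_n\leq Y_n+1$ (Lemma~\ref{lem:coupling}), which are monotone in $n$ and are sums of independent Bernoullis (Lemma~\ref{lem:sumofind.}), making sharp tail bounds available; and it then shows (Lemma~\ref{lem:ingr2}) that the coupled difference $S_b-S_a$ contracts by roughly a factor $1/2$ per round while the populations stay large, and (Lemma~\ref{lem:ingr1}) that once the difference is $O(1)$, the two processes coincide exactly with probability at least $e^{-7(b-a)}$ per round, so they collide well before the end. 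If you wanted to repair your approach, you would need, at minimum, a collision mechanism of this kind; tracking alone cannot finish the proof.

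Your plan for the explicit bounds is compatible with the paper's but glosses over a subtlety: transferring a finite computation of $p_n$ to a bound on $f$ requires quantitative control of how far $p_n$ is from the limit at moderate $n$. The paper handles this by proving the bounds $\limsup p_n\geq\supf$ and $\liminf p_n\leq\inff$ directly (Theorem~\ref{thm:nonconvergence}), using exact integer computations of bounds on $p_n$ for $n\leq 6000$ and an explicit chain of intervals $H_k$, $V_k$ that the process visits with quantified probability $\geq 0.998$, independently of and prior to the periodicity/continuity argument.
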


\begin{figure}
	\begin{center}
		\includegraphics[width=\textwidth]{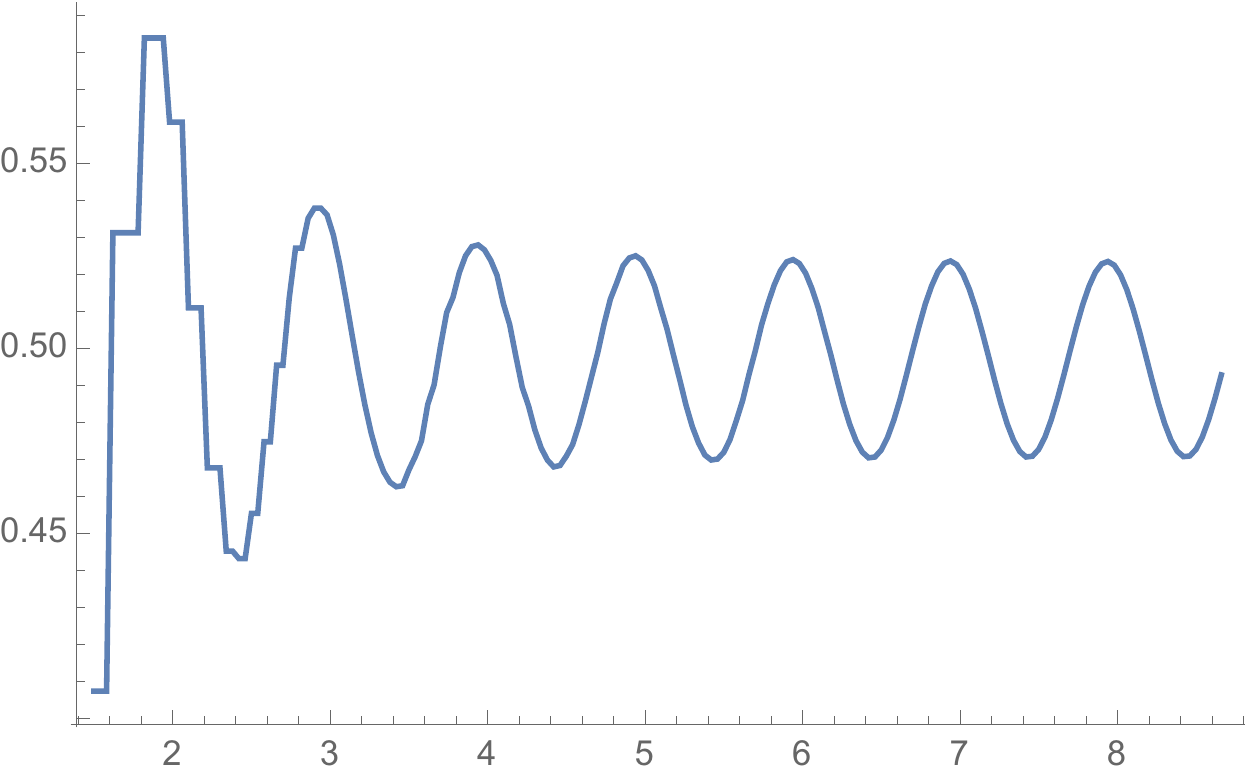}
	\end{center}
	\caption{$p_n$ as a function of~$\log{n}$ up to $n=6000$.}
	\label{figure1}
\end{figure}

The solution to the group Russian roulette problem as it is stated in 
Theorem~\ref{thm:main} was already stated in~\cite{Winkler}, without the 
explicit bounds on the limit function. However, \cite{Winkler} does not 
provide a proof, and as far as we know, there is no proof in the literature.

A number of papers \cites{LiPom, AthFid, RadGriLos, EisSteStr, Pro, BruOci, 
Kin, BraSteWil} study the following related problem and generalizations 
thereof. Suppose we have $n$~coins, each of which lands heads up with 
probability~$p$. Flip all the coins independently and throw out the coins that 
show heads. Repeat the procedure with the remaining coins until 0 or~1 coins 
are left. The probability of ending with 0~coins does not converge as $n\to 
\infty$ and becomes asymptotically periodic and continuous on the $\log n$ 
scale \cites{EisSteStr,RadGriLos}. For $p=1- 1/e$, the limit function takes 
values between 0.365879 and 0.369880, see~\cite{EisSteStr}*{Corollary~2}.

The coin tossing problem for $p=1-1/e$ has some similarities with group 
Russian roulette. In view of Theorem~\ref{thm:main} and the results in 
\cites{EisSteStr,RadGriLos}, the asymptotic behavior of these two models is 
qualitatively similar but their limit functions have different average values 
and amplitudes. In the above-mentioned papers, explicit expressions for the 
probability of ending with no coins could be obtained because of the 
independence between coin tosses. Analytic methods were subsequently employed 
to evaluate the limit. This strategy does not seem applicable to the group 
Russian roulette problem for the simple reason that no closed-form expressions 
can be obtained for the relevant probabilities. Our approach is, therefore, 
very different, and we end this introduction with an overview of our strategy.

We recursively compute rigorous upper and lower bounds on~$p_n$ for 
$n=1,\dots,6000$, using \Mathematica. Based on these computations, we identify 
values of~$n$ where $p_n$ is high (the ``hills'') and values of~$n$ where 
$p_n$ is low (the ``valleys''). To prove the non-convergence of the~$p_n$, we 
explicitly construct intervals $H_k$ and~$V_k$ ($k=0,1,\dotsc$) in such a way 
that, if $n\in H_k$ for some~$k$, then with high probability uniformly in~$k$, 
the number of survivors in the shooting process starting with $n$~people will, 
during the first~$k$ shooting rounds, visit each of the intervals $H_{k-1}, 
H_{k-2}, \dots, H_0$ (in that order), and similarly for the~$V_k$. By our 
rigorous bounds on~$p_n$ we know that $H_0$ is a hill and~$V_0$ a valley. This 
implies that the values of~$p_n$ on the respective intervals $H_k$ and~$V_k$ 
are separated from each other, uniformly in~$k$.

We stress that, although we make use of \Mathematica, our proof of 
Theorem~\ref{thm:main} is completely rigorous. There are no computer 
simulation methods involved, and we use only integer calculations to avoid 
rounding errors. To make this point clear, we isolated the part of the proof 
where we use \Mathematica\ as a separate lemma, Lemma~\ref{lem:mathematica}. 
In the proof of this lemma we explain how we compute the rigorous bounds we 
need. Our \Mathematica\ notebook and bounds on the~$p_n$ are available online 
at \url{http://arxiv.org/format/1507.03805}.

A generic bound on the probability that the number of survivors after each 
round successively visits the intervals in a carefully constructed sequence, 
appears in Section~\ref{sec:intervals} below. To obtain a good bound on this 
probability, we make crucial use of a coupling, introduced in 
Section~\ref{sec:coupling}, which allows us to compare the random number of 
survivors of a single shooting round with the number of empty boxes remaining 
after randomly throwing balls into boxes. For this latter random variable 
reasonably good tail bounds are readily available, and we provide such a bound 
in Section~\ref{sec:tails}.

The coupling is also crucial in proving the asymptotic continuity and 
periodicity of the~$p_n$ on the $\log n$ scale. To prove continuity, we 
consider what happens if we start the shooting process from two different 
points in the same interval, using for every round an independent copy of the 
coupled numbers of survivors for each point. By carefully analyzing the 
properties of our coupling, we will show that we can make the two coupled 
processes collide with arbitrarily high probability before reaching 0 or~1, by 
making the intervals sufficiently narrow on the $\log n$ scale, and taking the 
interval we start from far enough to the right. This shows that for our two 
starting points, the probabilities of eventually having no survivors must be 
very close to each other. Periodicity follows because our argument also 
applies when we start from two points that lie in different intervals, and the 
distance between the intervals in our construction is~1 on the $\log n$ scale.

The proof of non-convergence of the~$p_n$, based on the coupling and tail 
bounds from Section~\ref{sec:prelims}, is in Section~\ref{sec:nonconvergence}. 
The proof of asymptotic periodicity and continuity follows in 
Section~\ref{sec:periodicity}. Together, these results give 
Theorem~\ref{thm:main}.

\section{Coupling and tail bounds}
\label{sec:prelims}

\subsection{Coupling and comparison}
\label{sec:coupling}

Let $S_n$ be the number of survivors after one round of the shooting process 
starting with $n$~people. Using inclusion-exclusion, the distribution of~$S_n$ 
can be written down explicitly:
\begin{multline}
	\label{Sn-incl-excl}
	\P(S_n=k)
	= \binom{n}{k} (n-1)^{-n} \times \\
	  \sum_{r=0}^{n-k-2}\binom{n-k}{r} (-1)^r(n-k-r)^{k+r} (n-k-r-1)^{n-k-r}.
\end{multline}
We use this formula in Section~\ref{sec:nonconvergence}, but not in the rest 
of our analysis. Instead, let $Y_n$ be a random variable that counts the 
number of boxes that remain empty after randomly throwing $n-1$ balls into 
$n-1$ (initially empty) boxes. Similarly, let~$Z_n$ be the result of adding~1 
to the number of boxes that remain empty after randomly throwing $n$~balls 
into $n-1$ boxes. It turns out that these random variables $Y_n$ and~$Z_n$ are 
very close in distribution to~$S_n$, and are more convenient to work with.

In this section we describe a coupling between the $S_n$, $Y_n$ and~$Z_n$, for 
all $n\geq2$ simultaneously, in which (almost surely) $S_n$, $Y_n$ and~$Z_n$ 
are within distance~1 from each other for all~$n$, and the $Y_n$ and~$Z_n$ are 
ordered in~$n$ (see Lemma~\ref{lem:coupling} below). This last fact has the 
useful implication that in the shooting problem, if the number~$n$ of people 
alive in the room is known to be in an interval $[a, b]$, then the probability 
that the number of survivors of the next shooting round will lie in some other 
interval $[\alpha, \beta]$ can be estimated by considering only the two 
extreme cases $n=a$ and~$n=b$ (see Corollary~\ref{cor:coupling} below). At the 
end of the section, we extend our coupling to a coupling we can use to study 
shooting processes with multiple shooting rounds.

To describe our coupling, we construct a Markov chain as follows. Number the 
people $1, 2, \dots, n$ and define $A^n_i \subset \{1,\dots,n\}$ as the set of 
people who are not shot by any of the persons~1 up to~$i$ (inclusive). In this 
formulation, $S^n_i := \card{A^n_i}$ represents the number of survivors if 
only persons~1 up to~$i$ shoot, and we can write
\[
	S_n = S_n^n = \card{A^n_n}.
\]
The sets~$A^n_i$ ($i=1,\dots,n$) form a Markov chain inducing the 
process~$(S^n_i)_i$ with transition probabilities given by
\begin{equation}
\begin{split}
	\P(S^n_{i+1} = S^n_i - 1 \mid A^n_i)
	&= 1- \P(S^n_{i+1} = S^n_i \mid A^n_i) \\
	&= \frac{S^n_i - \I(i+1\in A^n_i)}{n-1}.
	\label{Mprocess1}
\end{split}
\end{equation}
Indeed, when person~$i+1$ selects his target, the number of persons who will 
survive the shooting round decreases by~1 precisely when person~$i+1$ aims at 
someone who has not already been targeted by any of the persons~1 up to~$i$, 
where we must take into account that person~$i+1$ cannot shoot himself (hence 
the subtraction of $\I(i+1\in A^n_i)$ in the numerator).

An explicit construction of the process described above can be given as 
follows. Suppose that on some probability space, we have random variables 
$U_1, U_2, \dotsc$ uniformly distributed on $(0,1]$, and, for all finite 
subsets~$A$ of~$\N$ and all $i\in\N$, random variables~$V_{A,i}$ uniformly 
distributed on the set $A\setminus\{i\}$, all independent of each other. Now 
fix $n\geq2$. Set $S^n_0 := n$ and $A^n_0 := \{1,\dots,n\}$, and for 
$i=0,1,\dots,n-1$, recursively define
\[
	A^n_{i+1} := \begin{cases}
		A^n_i \setminus \{V_{A^n_i,i+1}\}
		& \text{if } {\displaystyle U_{n-i} \leq \frac{\card{A^n_i} - 
		\I(i+1\in A^n_i)}{n-1}}; \\
		A^n_i
		& \text{otherwise};
	\end{cases}
\]
and set $S^n_{i+1} := \card{A^n_{i+1}}$. In this construction, the variable 
$U_{n-i}$ is used first to decide whether person~$i+1$ aims at someone who 
will not be shot by any of the persons~1 up to~$i$, and then we 
use~$V_{A^n_i,i+1}$ to determine his victim. Clearly, this yields a process 
with the desired distribution, and provides a coupling of the 
processes~$(S^n_i)_i$ for all $n\geq2$ simultaneously.

We now extend this coupling to include new processes $(Y^n_i)_i$ 
and~$(Z^n_i)_i$, as follows. For fixed $n\geq2$, we first set $Y^n_0 := n$ and 
$Z^n_0 := n$, and then for $i=0,1,\dotsc,n-1$ we recursively define
\begin{align}
	\label{Yprocess}
	Y^n_{i+1} &:= \begin{cases}
		Y^n_i-1 & \text{if }{\displaystyle U_{n-i}\leq\frac{Y^n_i}{n-1}};\\
		Y^n_i   & \text{otherwise};
	\end{cases} \\
\intertext{and}
	\label{Zprocess}
	Z^n_{i+1} &:= \begin{cases}
		Z^n_i-1 & \text{if }{\displaystyle U_{n-i}\leq\frac{Z^n_i-1}{n-1}};\\
		Z^n_i   & \text{otherwise}.
	\end{cases}
\end{align}
Then, by construction, $(Y^n_i)_i$ and $(Z^n_i)_i$ are Markov chains with the 
respective transition probabilities
\begin{align}
	\P(Y^n_{i+1} = Y^n_i-1 \mid Y^n_i)
	&= 1-\P(Y^n_{i+1} = Y^n_i \mid Y^n_i) = \frac{Y^n_i}{n-1};
	 \label{Mprocess2} \\
	\P(Z^n_{i+1} = Z^n_i-1 \mid Z^n_i)
	&= 1-\P(Z^n_{i+1} = Z^n_i \mid Z^n_i) = \frac{Z^n_i-1}{n-1}.
	 \label{Mprocess3}
\end{align}

The similarity with~\eqref{Mprocess1} is clear, and we see that we can 
interpret $Y^n_i$ as the number of empty boxes after throwing $i$~balls into 
$n$~boxes, where the first ball is thrown into the $n$th box and the remaining 
balls are thrown randomly into the first $n-1$ boxes only. Likewise, $Z^n_i$ 
is the number of empty boxes after throwing $i$~balls into the first $n-1$ of 
a total of $n$~boxes (so that the $n$th box remains empty throughout the 
process). If we now set
\[
	S_n := S^n_n, \quad Y_n := Y^n_n \text{ and } Z_n := Z^n_n,
\]
then $S_n$, $Y_n$ and~$Z_n$ have the interpretations described at the 
beginning of this section. The next lemma shows they have the properties we 
mentioned:

\begin{lemma}
	\label{lem:coupling}
	The coupling of the $S_n$, $Y_n$ and~$Z_n$ described above satisfies
	\begin{enumerate}
		\item $Y_n\leq Y_{n+1}\leq Y_n+1$ and $Z_n\leq Z_{n+1} \leq Z_n+1$ for 
			all $n\geq2$;
		\item $Y_n\leq S_n\leq Z_n\leq Y_n+1$ for all $n\geq2$.
	\end{enumerate}
\end{lemma}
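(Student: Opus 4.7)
The plan is to establish both parts by induction on the step index~$i$, systematically exploiting the fact that the updates of all the processes involved are driven by a common sequence of uniform variables $(U_k)$: on any given step the joint evolution is determined entirely by comparing a single variable $U_{n-i}$ against a handful of thresholds. The main obstacle is not a deep idea but the bookkeeping needed to enumerate the cases for where $U_{n-i}$ falls relative to those thresholds and verify that the sandwich inequalities survive in each case.

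For part~(1), I would prove the stronger pointwise statement $Y^n_i \leq Y^{n+1}_{i+1} \leq Y^n_i + 1$ (and the analogue for $Z$) by induction on~$i$. The base case $i=0$ is immediate: since the thresholds at the first step of $Y^{n+1}$ and $Z^{n+1}$ are $(n+1)/n$ and $n/n$ respectively, both $\geq 1$, the first step always fires, giving $Y^{n+1}_1 = Z^{n+1}_1 = n$. The crucial observation for the inductive step is that the transition of $Y^n$ from $i$ to $i+1$ and that of $Y^{n+1}$ from $i+1$ to $i+2$ both use the same uniform variable $U_{n-i}$, and are governed by the thresholds $Y^n_i/(n-1)$ and $Y^{n+1}_{i+1}/n$. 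Under the inductive hypothesis $Y^{n+1}_{i+1}\in\{Y^n_i,\,Y^n_i+1\}$; splitting into these two cases, and using $Y^n_i\leq n-1$ for $i\geq 1$ to order the two thresholds, one checks in each sub-interval of values of $U_{n-i}$ that the sandwich is preserved. The argument for $Z$ is identical, with thresholds $(Z^n_i-1)/(n-1)$ and $(Z^{n+1}_{i+1}-1)/n$ (the comparison now hinging on the trivial bound $Z^n_i\leq n$). Setting $i=n$ yields part~(1).

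For part~(2), I would prove by induction on~$i$ (for fixed~$n$) the joint sandwich $Y^n_i \leq S^n_i \leq Z^n_i \leq Y^n_i+1$. The base case is trivial since all three processes start at~$n$. For the inductive step, note that all three transitions compare the same $U_{n-i}$ against thresholds sharing the common denominator $n-1$, with numerators $Y^n_i$, $S^n_i - \I(i+1\in A^n_i)$, and $Z^n_i - 1$. The inductive hypothesis puts $S^n_i$ and $Z^n_i$ both in $\{Y^n_i,\,Y^n_i+1\}$ with $S^n_i\leq Z^n_i$, which leaves only three joint configurations; in each one I further split on whether $i+1\in A^n_i$ and verify that however $U_{n-i}$ falls relative to the at most three distinct thresholds, the resulting triple $(Y^n_{i+1},S^n_{i+1},Z^n_{i+1})$ again satisfies the sandwich. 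Specialising to $i=n$ then gives $Y_n\leq S_n\leq Z_n\leq Y_n+1$.
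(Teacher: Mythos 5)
Your proposal is correct and follows essentially the same route as the paper: a step-by-step induction on the index~$i$ proving the stronger pointwise sandwiches $Y^n_i\leq Y^{n+1}_{i+1}\leq Y^n_i+1$ and $Y^n_i\leq S^n_i\leq Z^n_i\leq Y^n_i+1$, driven by the shared uniform variable $U_{n-i}$ and a comparison of the transition thresholds. The paper organizes the inductive step a little more economically, observing only the two ``boundary'' configurations where the sandwich could conceivably break (and in the $Y$-vs-$Z$ case noting that once $Z^n_i=Y^n_i+1$ the gap stays exactly one), whereas you enumerate all configurations; both are the same argument in substance.
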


\begin{proof}
	As for~(1), we claim that the~$Y^n_i$ satisfy the stronger statement that
	\begin{equation}
		Y^n_i \leq Y^{n+1}_{i+1} \leq Y^n_i+1
		\qquad \text{for all $n\geq2$ and $i=0,1,\dots,n$}.
		\label{ordering1}
	\end{equation}
	To see this, first note that necessarily, $Y^{n+1}_1 = n = Y^n_0$. Now 
	suppose that $Y^n_i = Y^{n+1}_{i+1}$ for some index~$i$. Then 
	\eqref{Yprocess} implies that if $Y^{n+1}_{i+2} = Y^{n+1}_{i+1} - 1$, we 
	also have $Y^n_{i+1} = Y^n_i - 1$. Hence the ordering is preserved, 
	proving that $Y^n_i \leq Y^{n+1}_{i+1}$ for all $i\leq n$. Likewise, if 
	$Y^{n+1}_{i+1} = Y^n_i+1$ and $Y^n_{i+1} = Y^n_i - 1$ for some~$i$, then 
	\eqref{Yprocess} implies that $Y^{n+1}_{i+2} = Y^{n+1}_{i+1} - 1$. This 
	proves~\eqref{ordering1} and hence~(1) for the~$Y_n$. The proof for the 
	random variables~$Z_n$ is similar.

	As for property~(2), observe that if $Y^n_i = Z^n_i$ for some index~$i$ 
	and $Z^n_{i+1} = Z^n_i - 1$, then also $Y^n_{i+1} = Y^n_i - 1$. On the 
	other hand, if $Y^n_i = Z^n_i - 1$ for some index~$i$, then it follows 
	from the construction that $Y^n_j = Z^n_j - 1$ for all $j=i,i+1,\dots,n$. 
	Since $Y^n_0 = Z^n_0 = n$, we conclude that
	\begin{equation}
		Y^n_i \leq Z^n_i \leq Y^n_i+1
		\qquad \text{for all $n\geq2$ and $i=0,1,\dots,n$}.
		\label{ordering2}
	\end{equation}
	Furthermore, if $Y^n_i = S^n_i$ and $S^n_{i+1} = S^n_i - 1$, then our 
	construction implies that $Y^n_{i+1} = Y^n_i - 1$. Similarly, if $S^n_i = 
	Z^n_i$ and $Z^n_{i+1} = Z^n_i - 1$, then in our coupling we also have that 
	$S^n_{i+1} = S^n_i - 1$. It follows that
	\[
		Y^n_i \leq S^n_i \leq Z^n_i
		\qquad \text{for all $n\geq2$ and $i=0,1,\dots,n$},
	\]
	and this together with~\eqref{ordering2} establish property~(2).
\end{proof}

\begin{corollary}
	\label{cor:coupling}
	Suppose we have coupled the~$S_n$ as described above. Then, for any 
	intervals $[a,b]$ and $[\alpha,\beta]$, with $a,b,\alpha,\beta$ integers,
	\[
		\P(\exists n\in [a,b]\colon S_n\notin [\alpha,\beta])
		\leq \P(Y_a \leq \alpha-1) + \P(Y_b \geq \beta).
	\]
\end{corollary}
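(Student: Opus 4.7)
The plan is to deduce this directly from Lemma~\ref{lem:coupling}, using part~(1) to reduce the universal quantifier over $n\in[a,b]$ to the two endpoints, and part~(2) to translate statements about $S_n$ into statements about $Y_n$ and $Z_n$. No serious obstacle is expected; the only mild subtlety is that the hypothesis mixes $Y$ and $Z$ (the bound involves only $Y_a$ and~$Y_b$), so one has to exchange $Z$~for $Y$ at the end via the sandwich $Z_n\le Y_n+1$.

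First, since $\alpha,\beta$ are integers and $S_n$ is integer valued, I split the event
\[
\{\exists\,n\in[a,b]\colon S_n\notin[\alpha,\beta]\}
= \{\exists\,n\in[a,b]\colon S_n\le\alpha-1\}
\cup \{\exists\,n\in[a,b]\colon S_n\ge\beta+1\},
\]
and treat the two pieces separately, finishing with a union bound.

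For the lower piece, I use part~(2) of Lemma~\ref{lem:coupling} to write $Y_n\le S_n$, and then part~(1) to write $Y_a\le Y_n$ for every $n\in[a,b]$. Chaining these, $S_n\le\alpha-1$ for some $n\in[a,b]$ forces $Y_a\le\alpha-1$, so the first event is contained in $\{Y_a\le\alpha-1\}$. For the upper piece, I use $S_n\le Z_n$ from part~(2) together with $Z_n\le Z_b$ from part~(1), to conclude that $S_n\ge\beta+1$ for some $n\in[a,b]$ forces $Z_b\ge\beta+1$. Since part~(2) also gives $Z_b\le Y_b+1$, this implies $Y_b\ge\beta$, so the second event is contained in $\{Y_b\ge\beta\}$.

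Combining the two inclusions and taking probabilities gives exactly the stated bound. The only step that genuinely uses the coupling (as opposed to marginal distributional facts) is the monotonicity of $n\mapsto Y_n$ and $n\mapsto Z_n$ from part~(1), which is what lets us replace the quantifier over $n\in[a,b]$ by a check at the two endpoints $a$ and~$b$; this is the reason the corollary is stated for the coupled version of the~$S_n$.
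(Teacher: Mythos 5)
Your proof is correct and uses essentially the same ingredients as the paper: the sandwich $Y_n\le S_n\le Z_n\le Y_n+1$ from part~(2) and the monotonicity in~$n$ from part~(1), followed by a union bound. The paper's proof is a slightly more compact variant (it works directly with $\P(\forall n\colon Y_n\in[\alpha,\beta-1])$ and takes complements at the end, never mentioning $Z_n$ explicitly since $S_n\le Y_n+1$ suffices), but the mathematical content is identical.
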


\begin{proof}
	Let the $S_n$ and~$Y_n$ be coupled as described above. By 
	Lemma~\ref{lem:coupling},
	\[
	\begin{split}
		\P(\forall n \in [a,b]\colon S_n\in [\alpha,\beta])
		&\geq \P(\forall n \in [a,b]\colon Y_n \in [\alpha,\beta -1 ]) \\
		&=    \P(Y_a \geq \alpha, Y_b \leq \beta -1).
	\end{split}
	\]
	By taking complements the desired result follows.
\end{proof}

\begin{remark}
	The distribution of $Y_i^n$ is related to Stirling numbers of the second 
	kind, as follows. Recall that $Y_{i+1}^{n+1}$ can be interpreted as the 
	number of empty boxes after throwing $i$~balls randomly into $n$~boxes. We 
	claim that
	\[\begin{split}
		\P(Y_{i+1}^{n+1} = n-k)
		&= \P(\mbox{$n-k$ boxes empty, $k$ boxes non-empty}) \\
		&= \frac{n!}{(n-k)!} \frac1{n^i} S(i,k),
	\end{split}\]
	with $S(i,k)$ a Stirling number of the second kind. Indeed, $S(i,k)$ is by 
	definition the number of ways of partitioning the set of $i$~balls into 
	$k$ non-empty subsets. Balls in the same subset are thrown into the same 
	box. The number of ways to assign these subsets to $k$ distinct boxes 
	equals $n!/(n-k)!$. Finally, $n^i$ is the number of ways of distributing 
	$i$~balls over $n$~boxes.
\end{remark}

We now extend our coupling to a coupling we can use for an arbitrary number of 
shooting rounds, and for shooting processes starting from different values 
of~$n$. Since the shooting rounds must be independent, we take an infinite 
number of independent copies of the coupling described above, one for each 
element of~$\Z$ (so including the negative integers). The idea is to use a 
different copy for each round of a shooting process. For reasons that will 
become clear, we want to allow the copy that is used for the first round to 
vary with the starting point~$n$.

To be precise, let $X^n_i$ represent the number of survivors after round~$i$ 
of a shooting process started with~$n$ people in the room. Let~$k_n$ be the 
number of the copy of our coupling that is to be used for the first round of 
this process, and denote the $i$-th copy of~$S_n$ by~$S_n^{(i)}$. We 
recursively define
\begin{equation}
	\label{Xni}
	X^n_0 := n, \qquad
	X^n_{i+1} := S^{(k_n-i)}_{X^n_i} \text{ for $i\geq0$}.
\end{equation}
In this way, the $(k_n-i)$-th copy of the~$S_n$ is used to determine what 
happens in round~$i+1$ of the process. Note that the index $k_n-i$ becomes 
negative when $i>k_n$. Our setup is such that if $k_m = k_n$, then the 
shooting processes started from $m$ and~$n$ are coupled from the first 
shooting round onward, but if $k_m = k_n + l$ with $l>0$, then the shooting 
process started from~$m$ will first undergo $l$~independent shooting rounds 
before it becomes coupled with the shooting process started from~$n$. Thus, by 
varying the~$k_n$, we can choose after how many rounds shooting processes with 
different starting points become coupled.

\subsection{Tail bounds}
\label{sec:tails}

In Corollary~\ref{cor:coupling} we have given a bound on the probability that 
the shooting process, starting at any point in some interval, visits another 
interval after one shooting round. This bound is in terms of the tails of the 
distribution of the random variables~$Y_n$. In this section, we show that 
$Y_n$ in fact has the same distribution as a sum of independent Bernoulli 
random variables, and we use this result to obtain tail bounds for~$Y_n$.

\begin{lemma}
	\label{lem:sumofind.}
	For every $n\geq 3$, there exist $n-2$ independent Bernoulli random 
	variables $W_1,\dots,W_{n-2}$ such that $Y_n$ has the same distribution as 
	$W_1 + W_2 + \dots + W_{n-2}$.
\end{lemma}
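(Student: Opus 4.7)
The plan is to show that the probability generating function (PGF) of $Y_n$ has only real, non-positive roots. Since this PGF has degree $n-2$ and takes the value $1$ at $s=1$, such a factorization normalizes to a product of $n-2$ Bernoulli PGFs, exhibiting $Y_n$ as a sum of $n-2$ independent Bernoullis.

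First I would derive an operator equation for the PGF. Since $Y^n_0=n$ and $Y^n_1=n-1$ force the decrement probability in~\eqref{Mprocess2} to be at least $1$, the first two transitions are deterministic and $\phi_{n,2}(s):=\E[s^{Y^n_2}]=s^{n-2}$. For $i\geq 2$ we always have $Y^n_i\leq n-2$, so conditioning on $Y^n_i$ in~\eqref{Mprocess2} yields
\[
\phi_{n,i+1}(s) = \phi_{n,i}(s) + \tfrac{1-s}{n-1}\,\phi_{n,i}'(s) =: L\phi_{n,i}(s),
\]
and hence $g_n(s):=\E[s^{Y_n}] = L^{n-2} s^{n-2}$.

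The core of the argument is to show inductively that $L$ preserves the class of polynomials of degree at most $n-2$ whose roots are all real and non-positive. Suppose $p$ is such a polynomial, of degree $d\leq n-2$, with distinct roots $r_1<\cdots<r_k\leq 0$ of multiplicities $m_1,\dots,m_k$. A direct computation gives
\[
Lp(s) = \prod_i(s-r_i)^{m_i-1}\cdot\frac{h(s)}{n-1}, \qquad h(s) := (n-1)\prod_i(s-r_i) + (1-s)\sum_i m_i\prod_{j\neq i}(s-r_j),
\]
where $h$ has degree $k$ with positive leading coefficient $n-1-d$. Since $1-r_i>0$ for every $i$, one checks that $\operatorname{sign}(h(r_i)) = (-1)^{k-i}$, so the values $h(r_1),\dots,h(r_k)$ alternate in sign. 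The intermediate value theorem then produces a real root of $h$ in each $(r_i,r_{i+1})$, and comparing $\operatorname{sign}(h(r_1))=(-1)^{k-1}$ with $\operatorname{sign}(h(-\infty))=(-1)^k$ yields one further root in $(-\infty, r_1)$. This accounts for all $k$ roots of $h$, all real and non-positive, so $Lp$ has $d$ real non-positive roots. Iterating from $s^{n-2}$ proves the claim for $g_n$, which then factors as $g_n(s)=\prod_{k=1}^{n-2}((1-p_k)+p_k s)$ with $p_k\in(0,1]$ — the PGF of a sum of $n-2$ independent Bernoullis.

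I expect the preservation step to be the main obstacle. Classical results such as those of Hermite or Laguerre handle constant-coefficient operators $p\mapsto p+cp'$, but the $s$-dependent coefficient $(1-s)/(n-1)$ puts our operator outside their scope. The sign-alternation argument works precisely because every root of $p$ lies in $(-\infty,0]\subset(-\infty,1)$, keeping $1-r_i$ uniformly positive; this is what forces the values $h(r_i)$ to strictly alternate and produces the required real roots between consecutive $r_i$'s.
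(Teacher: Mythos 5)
Your proof is correct, and it reaches the same Vatutin--Mikha{\u\i}lov conclusion (real-rooted PGF $\Rightarrow$ sum of independent Bernoullis) by a genuinely different route. The paper first computes the factorial moments $\E\binom{Y_n}{k}=\binom{n-1}{k}\bigl(\tfrac{n-k-1}{n-1}\bigr)^{n-1}$ in closed form, writes $\E(z^{Y_n})=(n-1)^{-(n-1)}R(z-1)$, and then observes that $z^{n-1}R(1/z)=\bigl(z\tfrac{d}{dz}\bigr)^{n-1}(z+1)^{n-1}$, so real-rootedness follows from the standard fact that $f\mapsto zf$ and $f\mapsto f'$ both preserve the property of having only real roots. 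You instead extract the one-step operator $L p = p + \tfrac{1-s}{n-1}p'$ directly from the transition probabilities \eqref{Mprocess2}, start the recursion at $\phi_{n,2}(s)=s^{n-2}$ (correctly noting the first two steps are deterministic so that the conditional expectation formula is valid), and prove that $L$ preserves polynomials of degree at most $n-2$ with only real, non-positive roots via a sign-alternation/IVT argument: $h(r_i)$ has sign $(-1)^{k-i}$ because $1-r_i>0$ and $\prod_{j\ne i}(r_i-r_j)$ has sign $(-1)^{k-i}$, and the positive leading coefficient $n-1-d$ supplies the $k$-th root to the left of $r_1$. I checked the details (the degree bookkeeping, the one-root case, and the conclusion that all $k$ roots of $h$ lie strictly below $r_k\le 0$) and they hold. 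What each approach buys: the paper's route is short once the closed-form factorial moments and the $\bigl(z\tfrac{d}{dz}\bigr)^{n-1}$ identity are in hand, and it reuses textbook facts about real-rootedness; your route avoids any closed-form computation and makes the real-rootedness visibly a consequence of the Markovian dynamics of the occupancy process, at the cost of having to establish the preservation property for the non-constant-coefficient operator $L$ from scratch, since as you note it falls outside the Hermite--Poulain / constant-shift framework. A small additional merit of your argument is that it gets the non-positivity of the roots directly from the induction, whereas the paper deduces it afterwards from positivity of the PGF coefficients.
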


\begin{proof}
	The proof is based on the following beautiful idea due to Vatutin and 
	Mikha{\u\i}lov~\cite{VatutinMikhailov}: we will show that the generating 
	function of~$Y_n$ has only real roots, and then show that this implies the 
	statement of the lemma. For the first step we observe that 
	$\tbinom{Y_n}{k}$ is just the number of subsets of size~$k$ of the boxes 
	that remain empty after throwing $n-1$~balls into $n-1$~boxes. This 
	implies that
	\[\begin{split}
		\E\binom{Y_n}{k}
		&= \E \sum_{1\leq i_1 < \cdots < i_k \leq n-1} \I(\mbox{boxes 
		$i_1,\dots,i_k$ empty}) \\
		&= \binom{n-1}{k} \P(\mbox{boxes $1,\dots, k$ empty})
		 = \binom{n-1}{k}\left( \frac{n-k-1}{n-1} \right)^{n-1}.
	\end{split}\]
	Hence, if we define
	\[
		R(z) = \sum_{k=0}^{n-1} \binom{n-1}{k} (n-k-1)^{n-1} z^k,
	\]
	then we see that
	\[
		\E\left( z^{Y_n} \right)
		= \E\biggl( \, \sum_{k=0}^{n-1} \binom{Y_n}{k} (z-1)^k \, \biggr)
		=(n-1)^{-(n-1)} R(z-1).
	\]
	
	We want to show that $R$ has only real roots, for which it is enough to 
	show that $z^{n-1}R(1/z)$ has only real roots. To show this, we now write
	\[\begin{split}
		z^{n-1}R(1/z)
		&= \sum_{k=0}^{n-1} \binom{n-1}{k} (n-k-1)^{n-1} z^{n-k-1} \\
		&= \sum_{k=0}^{n-1} \binom{n-1}{k} \left(z \frac{d}{dz}\right)^{\!n-1} 
		z^{n-k-1},
	\end{split}\]
	from which it follows that
	\[
		z^{n-1}R(1/z)
		= \left(z \frac{d}{dz}\right)^{\!n-1} \:
			\sum_{k=0}^{n-1} \binom{n-1}{k} z^{n-k-1}
		= \left(z \frac{d}{dz}\right)^{\!n-1} (z+1)^{n-1}.
	\]
	Now observe that if a polynomial $f(z)$ has only real roots, then so do 
	the polynomials $zf(z)$ and~$f'(z)$ (one way to see the latter is to 
	observe that between any two consecutive zeroes of~$f$, there must be a 
	local maximum or minimum). Therefore, our last expression for $z^{n-1} 
	R(1/z)$ above has only real roots and hence so does~$R$.
	
	It follows that the generating function $\E(z^{Y_n})$ of~$Y_n$ has only 
	real roots. Now note that $\E(z^{Y_n})$ is a polynomial of degree $n-2$ 
	which cannot have positive roots. Let its roots be $-d_1, -d_2, \dots, 
	-d_{n-2}$, with all the $d_i\geq 0$, and let $W_1,\dots,W_{n-2}$ be 
	independent Bernoulli random variables such that
	\[
		\P(W_i=1) = 1-\P(W_i=0) = \frac{1}{1+d_i},
		\qquad i=1,\dots,n-2.
	\]
	Note that these are properly defined random variables because of the fact 
	that $d_i\geq 0$ for all~$i$. Writing $W = W_1+\dots+W_{n-2}$, we now 
	have
	\[
		\E\left( z^W \right)
		= \prod_{i=1}^{n-2} \E\left( z^{W_i} \right)
		= \prod_{i=1}^{n-2} \frac{z+d_i}{1+d_i}
		= \E\left( z^{Y_n} \right),
	\]
	so $Y_n$ and~$W$ have the same distribution.
\end{proof}

Tail bounds for sums of independent Bernoulli random variables are generally 
derived from a fundamental bound due to Chernoff~\cite{Chernoff} by means of 
calculus, see e.g.~\cite{AlonSpencer}*{Appendix~A}. Here we use the following 
result:

\begin{theorem}
	\label{thm:tails}
	Let $W$ be the sum of~$n$ independent Bernoulli random variables, and let 
	$p = \E W/n$. Then for all $u \geq 0$ we have
	\begin{align}
		\P(W\leq \E W - u)
		&\leq \exp\left( -\frac12 \frac{u^2}{np(1-p)-u(1-2p)/3} \right),
		\label{jansonineq1} \\
		\intertext{and}
		\P(W\geq \E W + u)
		&\leq \exp\left( -\frac12 \frac{u^2}{np(1-p)+u(1-2p)/3} \right).
		\label{jansonineq2}
	\end{align}
\end{theorem}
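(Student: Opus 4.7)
The plan is to follow the classical Chernoff exponential-moment method, combined with a Bennett-type estimate on the moment generating function of each centered Bernoulli. It is enough to establish the upper-tail bound \eqref{jansonineq2}; the lower-tail bound \eqref{jansonineq1} then follows by applying \eqref{jansonineq2} to $W' := n - W$, which is again a sum of $n$ independent Bernoullis but with mean $n(1-p)$. Indeed, $\{W \leq \E W - u\}$ and $\{W' \geq \E W' + u\}$ coincide, and the substitution $p \mapsto 1-p$ flips the sign of the $u(1-2p)/3$ term, turning \eqref{jansonineq2} into \eqref{jansonineq1}.

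Denote the parameters of the Bernoullis by $p_1,\dots,p_n$ and set $X_i := W_i - p_i$. For $t \geq 0$, Markov's inequality applied to $e^{t(W-\E W)}$ gives
\[
\P(W \geq \E W + u) \leq \exp\!\Bigl(-tu + \sum_{i=1}^n \log \E\bigl[e^{tX_i}\bigr]\Bigr).
\]
The technical heart of the proof is the Bennett-type estimate
\[
\log\!\bigl((1-p_i)e^{-tp_i} + p_i e^{t(1-p_i)}\bigr) \leq \frac{p_i(1-p_i)\, t^2/2}{1 - t(1-2p_i)/3},
\]
valid whenever the denominator is positive. I would prove this by expanding both sides in powers of $t$ at the origin: both sides vanish through first order, and both second derivatives at $0$ equal $p_i(1-p_i)$; one then checks that the higher-order cumulants of $X_i$ are dominated termwise by the coefficients of the geometric series expansion of the right-hand side, which is a direct algebraic verification.

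Summing this MGF bound over $i$ and aggregating into the average $p$ via Jensen's inequality applied to the rational function $p \mapsto p(1-p)/(1-t(1-2p)/3)$ (which, after a separate check, is concave in $p$ for each admissible $t \geq 0$) yields
\[
\P(W \geq \E W + u) \leq \exp\!\Bigl(-tu + \frac{np(1-p)\, t^2/2}{1 - t(1-2p)/3}\Bigr).
\]
One then substitutes the Bernstein-type choice $t^\ast := u/\bigl(np(1-p) + u(1-2p)/3\bigr)$; a short computation gives $1 - t^\ast(1-2p)/3 = np(1-p)/(np(1-p)+u(1-2p)/3)$, so the exponent simplifies to exactly $-u^2/\bigl(2\,(np(1-p)+u(1-2p)/3)\bigr)$, proving \eqref{jansonineq2}.

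The main obstacle is the sharp MGF estimate with the constant $(1-2p_i)/3$, which is strictly tighter than what one obtains from off-the-shelf Hoeffding or Bernstein arguments using only $|X_i| \leq \max(p_i, 1-p_i)$. The termwise comparison of the cumulant expansion of $\log \E[e^{tX_i}]$ against a geometric series is the crucial calculation exploiting the specific two-point structure of a Bernoulli; one must also separately verify the concavity required to apply Jensen's inequality when aggregating across independent variables with unequal parameters.
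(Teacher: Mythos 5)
Your high-level strategy is sound and genuinely different from the paper's. The paper (following Janson) first passes to the averaged parameter $p$ inside the exponential-moment bound of Alon--Spencer Theorem~A.1.9, then substitutes the optimal $\lambda$ to obtain the relative-entropy form, and finally proves the inequality $f(x)\geq 0$ between the Kullback--Leibler divergence and the Bernstein-type exponent via $f(0)=f'(0)=0$, $f''\geq 0$. You instead work on the moment-generating-function side: bound each $\log\E[e^{tX_i}]$ by a Bernstein-type rational function of $t$, aggregate over $i$ by Jensen (and the concavity of $p\mapsto p(1-p)/(1-t(1-2p)/3)$ does hold for $0\le t<3$, and one can check that the chosen $t^\ast$ satisfies $t^\ast<3$ whenever $u\le n(1-p)$, the only nontrivial range), and then plug in the suboptimal $t^\ast$; the symmetry reduction of \eqref{jansonineq1} to \eqref{jansonineq2} and the closed-form simplification at $t^\ast$ are both correct. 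These are dual viewpoints --- your per-variable MGF bound is actually a \emph{stronger} statement than the per-variable Legendre-transform inequality $f(x)\geq 0$ that the paper establishes, since the former implies the latter (take Legendre transforms) but not conversely.

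The gap is in the proof of your ``technical heart,'' the per-variable estimate
\[
	\log\bigl((1-p_i)e^{-tp_i}+p_ie^{t(1-p_i)}\bigr)
	\le \frac{p_i(1-p_i)\,t^2/2}{1-t(1-2p_i)/3}.
\]
You propose to prove it by showing that the cumulants of $X_i$ are dominated termwise by the coefficients of the geometric-series expansion of the right-hand side. That termwise comparison is false. Take $p_i=\tfrac12$: the left-hand side is $\log\cosh(t/2)=\tfrac{t^2}{8}-\tfrac{t^4}{192}+\tfrac{t^6}{2880}-\dotsb$, while the right-hand side collapses to the single term $t^2/8$. The coefficient of $t^6$ on the left is $1/2880>0$, yet the corresponding coefficient on the right is $0$, so domination fails already at order~$6$. (For $p_i>\tfrac12$ it fails at order~$5$: the $t^5$-coefficient of the right-hand side is $\tfrac{20}{9}p_iq_i(q_i-p_i)^3<0$, whereas the fifth cumulant $p_iq_i(q_i-p_i)\bigl((q_i-p_i)^2-8p_iq_i\bigr)$ can be positive.) The inequality itself appears to be numerically correct, but it is a genuinely sharp statement --- stronger, as noted above, than the one Janson proves --- and a termwise cumulant comparison cannot establish it. You would need an argument that exploits global cancellation, e.g.\ comparing second derivatives of the difference on the whole interval as in the paper's $f''\geq 0$ computation, or falling back on the paper's route of optimizing first and then comparing the resulting exponents.
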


\begin{proof}
	This theorem has been proved by Janson, see \cite{Janson}*{Theorems 1 
	and~2}. For the convenience of the reader we outline the main steps of the 
	proof of inequality~\eqref{jansonineq2} here. 
	Inequality~\eqref{jansonineq1} follows by symmetry.

	By \cite{AlonSpencer}*{Theorem~A.1.9} we have for all $\lambda>0$,
	\begin{equation}
		\label{A19}
		\P(W \geq \E W+u) < e^{-\lambda pn} (pe^{\lambda} + (1-p))^n 
		e^{-\lambda u}.
	\end{equation}
	By the remark following \cite{AlonSpencer}*{Theorem~A.1.9}, for given 
	$p,n,u$, the value of~$\lambda$ that minimizes the right hand side of 
	inequality~\eqref{A19} is
	\begin{equation}
		\label{optimallambda}
		\lambda = \log \biggl[ \left( \frac{1-p}{p} \right) \left( \frac{u 
		+np}{n-(u+np)} \right) \biggr].
	\end{equation}
	In \cite{AlonSpencer} suboptimal values of~$\lambda$ are substituted 
	into~\eqref{A19} to obtain bounds. Substituting the optimal 
	value~\eqref{optimallambda} into~\eqref{A19}, and letting $q=1-p$ and $x = 
	u/n \in [0,q]$, yields \cite{Janson}*{Inequality~(2.1)}:
	\[
		\P(W \geq \E W +u) \leq \exp\biggl( -n (p+x) \log \frac{p+x}{p} - n 
		(q-x) \log \frac{q-x}{q} \biggr).
	\]
	Following~\cite{Janson} we define, for $0\leq x\leq q$,
	\[
		f(x) = (p+x) \log \frac{p+x}{p} + (q-x) \log \frac{q-x}{q} - 
		\frac{x^2}{2(pq+x(q-p)/3)}.
	\]
	Then $f(0)=f'(0) =0$ and
	\[
		f''(x) = \frac{\frac{1}{3} pq (q-p)^2 x^2 + \frac{1}{27} (q-p)^3 x^3 + 
		p^2 q^2 x^2}{(x+p)(q-x)(pq+x(q-p)/3)^3} \geq 0,
	\]
	for $0\leq x\leq q$. Hence $f(x)\geq 0$ for $0\leq x\leq q$, which 
	proves~\eqref{jansonineq2}.
\end{proof}

\begin{corollary}
	\label{cor:tailY}
	Let $n\geq 4$. Then for all $u\geq 0$,
	\begin{align*}
		\P\left( Y_n \leq \frac{n-5/3}{e} -u \right)
		&\leq \exp\left( -\frac{1}{2} \frac{e^2 u^2}{(n-1)(e-1)} \right) \\
		\intertext{and}
		\P\left( Y_n \geq \frac{n-3/2}{e} +u \right)
		&\leq \exp\left( -\frac{1}{2} \frac{e^2 u^2}{(n-1)(e-1) + ue(e-2)/3} 
		\right).
	\end{align*}
\end{corollary}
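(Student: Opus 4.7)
The approach is to invoke Lemma~\ref{lem:sumofind.}, which writes $Y_n$ as a sum of $n-2$ independent Bernoullis, and then apply Theorem~\ref{thm:tails} directly. Writing $m := n - 2$ and $p := \E Y_n / m$, Theorem~\ref{thm:tails} gives, for any $v \geq 0$,
\[
	\P(Y_n \leq \E Y_n - v) \leq \exp\biggl( -\frac{v^2}{2\bigl[mp(1-p) - v(1-2p)/3\bigr]} \biggr),
\]
and the analogous upper-tail bound with a $+$ in the denominator. The corollary's concrete form then follows from two ingredients: (i) a sandwich of $\E Y_n$ between $(n-5/3)/e$ and $(n-3/2)/e$, and (ii) a clean bound on $mp(1-p)$.

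For (i), the mean is explicit: since $Y_n$ counts empty boxes among $n-1$ after $n-1$ independent throws,
\[
	\E Y_n = (n-1)\bigl(1 - 1/(n-1)\bigr)^{n-1}.
\]
Taylor-expanding $(n-1)\log(1 - 1/(n-1)) = -1 - 1/(2(n-1)) - 1/(3(n-1)^2) - \cdots$ and exponentiating gives $\E Y_n = (n - 3/2)/e - 5/(24 e(n-1)) + O(n^{-2})$, making the target inequalities
\[
	(n-5/3)/e \;\leq\; \E Y_n \;\leq\; (n-3/2)/e \qquad (n \geq 4)
\]
plausible. To prove them rigorously and uniformly in $n$, I would combine partial-sum estimates on the log series (whose tail has a definite sign) with a direct check for the smallest values $n = 4, 5, \ldots$. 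For (ii), note $mp(1-p) = \E Y_n - (\E Y_n)^2/m$. The classical inequality $(1 - 1/m)^m < 1/e$ gives $\E Y_n < (n-1)/e$, and monotonicity of $t \mapsto t - t^2/m$ on $[0, m/2]$ (which applies once $(n-1)/e \leq (n-2)/2$, i.e.\ $n \geq 5$; $n=4$ is handled directly) then yields
\[
	mp(1-p) \;\leq\; (n-1)(e-1)/e^2.
\]

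With (i) and (ii) in hand, the two tail bounds follow by a change of centering. For the lower tail, set $v := u + \E Y_n - (n-5/3)/e \geq u$, so that $\{Y_n \leq (n-5/3)/e - u\} \subseteq \{Y_n \leq \E Y_n - v\}$. The upper mean bound from (i) forces $p < 1/2$ for $n \geq 4$, so the correction term $-v(1-2p)/3$ in the Janson denominator is nonpositive and may be dropped; combining $v \geq u$ with (ii) then delivers the stated bound. For the upper tail, set $v := u + (n-3/2)/e - \E Y_n \geq u$ and retain the correction $+v(1-2p)/3$. Matching the Janson denominator $mp(1-p) + v(1-2p)/3$ to the target $[(n-1)(e-1) + ue(e-2)/3]/e^2$ splits into (ii) and the further inequality $v(e-2) \geq eu(1-2p)$; using $v \geq u$, this reduces to $p \geq 1/e$, which follows from the lower mean bound (indeed, $\E Y_n \geq (n-5/3)/e$ forces $p \geq (1 + 1/(3(n-2)))/e$).

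The main obstacle is the uniform mean sandwich in (i): the constants $5/3$ and $3/2$ are asymptotically tight, so no crude bound on $(1-1/m)^m$ will suffice; one must control the Taylor remainder with sign and verify the smallest cases individually. The remaining manipulations are elementary algebra.
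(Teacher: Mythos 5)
Your overall strategy is the same as the paper's: combine Lemma~\ref{lem:sumofind.} (so that $Y_n$ is a sum of $n-2$ independent Bernoullis) with Janson's bound, Theorem~\ref{thm:tails}, and then translate the resulting bound at $\E Y_n$ into a bound at the explicit centers $(n-5/3)/e$ and $(n-3/2)/e$. The algebra you do afterwards --- dropping the negative correction in the lower tail using $p<1/2$, bounding $mp(1-p)$ by $(n-1)(e-1)/e^2$, and the cross-multiplication argument with $v(e-2)\geq eu(1-2p)$ for the upper tail --- is correct, though more elaborate than necessary: since $\E Y_n \geq (n-5/3)/e$ one has the event inclusion $\{Y_n\leq (n-5/3)/e-u\}\subseteq\{Y_n\leq\E Y_n-u\}$, and similarly for the upper tail, after which one applies Theorem~\ref{thm:tails} directly at deviation $u$ and just bounds $(n-2)p$, $1-p$, and $1-2p$. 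No change of centering or monotonicity-in-$v$ argument is needed; this is what the paper does.

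The genuine gap is in ingredient (i), the uniform sandwich
\[
	\frac{n-5/3}{e}\;\leq\; \E Y_n = (n-1)\Bigl(1-\frac{1}{n-1}\Bigr)^{n-1}\;\leq\;\frac{n-3/2}{e}\qquad(n\geq 4),
\]
which you correctly identify as the crux but do not prove. You sketch a plan (Taylor remainder with definite sign plus a finite check), and note yourself that crude bounds on $(1-1/m)^m$ will not do because the constants $3/2$ and $5/3$ are asymptotically tight. That plan is plausible but is not carried out, and without it the corollary is not established. The paper settles this step cleanly and without any case-checking: after substituting $u=1/(n-1)$, both sides reduce to the two elementary inequalities
\[
	(1-u)^{1/u}\leq \bigl(1-\tfrac12 u\bigr)/e,
	\qquad
	(1-u)^{1/u}\geq \bigl(1-\tfrac12 u-\tfrac12 u^2\bigr)/e,
	\qquad u\in(0,1),
\]
which are proved by taking logs, forming $h_1(u)=u+\log(1-u)-u\log(1-\tfrac12 u)$ and $h_2(u)=u+\log(1-u)-u\log(1-\tfrac12 u-\tfrac12 u^2)$, and checking $h_1(0)=h_1'(0)=h_2(0)=h_2'(0)=0$ together with $h_1''<0<h_2''$ on $(0,1)$. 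The lower bound then gives $\E Y_n\geq (n-3/2)/e - \tfrac{1}{2e(n-1)}\geq(n-5/3)/e$ once $n\geq 4$. If you want to complete your proposal, replacing your Taylor-remainder sketch by this explicit second-derivative sign analysis is the missing step.
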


\begin{proof}
	Recall that $Y_n$ can be interpreted as the number of empty boxes after 
	randomly throwing $n-1$ balls into $n-1$ boxes. Thus we have
	\begin{equation}
		\label{expectationY}
		\E Y_n = (n-1) \left( 1-\frac{1}{n-1} \right)^{n-1}.
	\end{equation}
	We will bound this expectation using the following two inequalities, which 
	hold for all $u\in(0,1)$:
	\begin{align}
		(1-u)^{1/u} &\leq (1-\textstyle\frac12 u) / e,
		\label{inequalityu1}\\
		(1-u)^{1/u} &\geq (1-\textstyle\frac12 u - \frac12 u^2)/e.
		\label{inequalityu2}
	\end{align}
	To prove these inequalities, we define
	\begin{align*}
		h_1(u) &= u + \log (1-u) - u \log (1-\tfrac12 u), \\
		h_2(u) &= u + \log(1-u) - u \log(1-\tfrac12 u -\tfrac12 u^2).
	\end{align*}
	Then $h_1(0)=h_2(0) =h'_1(0) = h'_2(0) =0$ and moreover
	\[
		h_1''(u) = - \frac{u(5-5u+u^2)}{(1-u)^2(2-u)^2},\qquad
		h_2''(u) = \frac{u(7+2u)}{(1-u)(2+u)^2}.
	\]
	Hence $h_1''(u) < 0$ and $h_2''(u)>0$ for $u\in (0,1)$. Therefore, 
	$h_1(u)<0$ and $h_2(u)>0$ for all $u\in (0,1)$, which implies 
	\eqref{inequalityu1} and~\eqref{inequalityu2}.
	
	By \eqref{expectationY} and~\eqref{inequalityu1}, we have that
	\begin{equation}
		\label{EYupper}
		\E Y_n \leq \frac{n-3/2}{e}.
	\end{equation}
	Similarly, using \eqref{expectationY} and~\eqref{inequalityu2}, we obtain 
	that
	\begin{equation}
		\label{EYlower}
		\E Y_n
		\geq \frac{n-3/2}{e} - \frac{1}{2e(n-1)}
		\geq \frac{n-5/3}{e} \qquad \text{for $n\geq4$}.
	\end{equation}
	Since, by Lemma~\ref{lem:sumofind.}, $Y_n$ has the same distribution as a 
	sum of $n-2$ independent Bernoulli random variables, 
	Theorem~\ref{thm:tails} applies to the~$Y_n$. It follows from 
	\eqref{EYupper} and~\eqref{EYlower} that in applying this theorem to~$Y_n$ 
	for $n\geq4$, we can use that
	\[
		(n-2)p \leq \frac{n-1}{e},\qquad
		1-p \leq \frac{e-1}{e},\qquad
		0\leq 1-2p \leq \frac{e-2}{e},
	\]
	where $p = \E Y_n /(n-2)$. This yields the desired result.
\end{proof}

\subsection{Visiting consecutive intervals}
\label{sec:intervals}

Corollaries \ref{cor:coupling} and~\ref{cor:tailY} together give an explicit 
upper bound on the probability that the shooting process, starting anywhere in 
some interval, visits a given other interval after the next shooting round. In 
this section, we extend this result to more than one round. We give an 
explicit construction of a sequence of intervals $I_0, I_1, \dotsc$ and, using 
Corollaries \ref{cor:coupling} and~\ref{cor:tailY}, we estimate the 
probability that the shooting process successively visits each interval in 
this specific sequence. 

To start our construction, suppose that the (real) numbers $I^-_0,I^+_0 \geq 
2$, with $I^+_0 < eI^-_0$, and a parameter $\gamma\in (0,1]$ are given. Set
\begin{equation}
	\label{s_0}
	s_0 := \sum_{i=1}^\infty \sqrt{i}\,e^{-i/2} = 2.312449444\cdots,
\end{equation}
and define the number~$c_0$ in terms of $I^+_0$, $I^-_0$ and~$\gamma$ by
\begin{equation}
	\label{c_0}
	c_0	:= \left( \sqrt{\smash[b]{I^+_0}}-\sqrt{\smash[b]{I^-_0}} \, \right) 
	\frac{\gamma}{s_0\sqrt{e}}.
\end{equation}
For all $k\geq1$, we now define the real numbers $I^-_k$ and~$I^+_k$ by 
\begin{align}
	\label{left}
	I^-_k &:= I^-_0 e^k \biggl( 1 + c_0 \sqrt{\frac{e}{\smash[b]{I^-_0}}} 
	\sum_{i=1}^k \sqrt{i} \, e^{-i/2} \biggr), \\
	\label{right}
	I^+_k &:= I^+_0 e^k \biggl( 1 - c_0 \sqrt{\frac{e}{\smash[b]{I^+_0}}} 
	\sum_{i=1}^k \sqrt{i} \, e^{-i/2} \biggr),
\end{align}
and we set $I_k := [\floor{I^-_k}, \ceil{I^+_k}]$ for all $k\geq0$. These 
specific choices for $I^+_k$ and~$I^-_k$ may look peculiar, but the reader 
will see in our calculations below why they are convenient. At this point, let 
us just note that our intervals are disjoint (since $I^-_{k+1} > I^-_0 e^{k+1} 
> I^+_0 e^k > I^+_k$) and their lengths are (roughly) given by the relatively 
simple expression
\[
	I^+_k - I^-_k
	= (I^+_0 - I^-_0) e^k \biggl( 1 - \frac{\gamma}{s_0} \sum_{i=1}^k \sqrt{i} 
	\, e^{-i/2} \biggr).
\]
For $\gamma=1$, this reduces to
\[
	I^+_k - I^-_k
	= (I^+_0 - I^-_0) e^k \frac{1}{s_0}
		\sum_{j\geq 1} \sqrt{j+k} \, e^{-(j+k)/2}
	\geq (I^+_0 - I^-_0) e^{k/2},
\]
which shows that the lengths of our intervals~$I_k$ grow to infinity with~$k$.

We want to consider shooting processes starting from any $n\in 
\bigcup_{k=1}^\infty I_k$, and we couple these processes as in~\eqref{Xni}, 
where we take $k_n$ equal to the index of the interval containing~$n$. In this 
way, all shooting processes starting from the same interval are coupled from 
the first round onward, while a shooting process starting from a point 
in~$I_{k+l}$ first undergoes $l$ independent shooting rounds (and, with high 
probability, reaches~$I_k$), before it becomes coupled to a shooting process 
starting from a point in~$I_k$. The following lemma gives an estimate of the 
probability that a shooting process starting from any $n\in 
\bigcup_{k=1}^\infty I_k$ visits each of the intervals $I_{k_n-1}, I_{k_n-2}, 
\dots, I_0$, in that order.

\begin{lemma}
	\label{lem:intervals}
	Let the numbers $I^+_0,I^-_0$ and the parameter $\gamma\in (0,1]$ be 
	given, and define the intervals $I_k$ ($k\geq0$) by \eqref{left} 
	and~\eqref{right}, as explained above. For each $n\in \bigcup_{k=1}^\infty 
	I_k$, let $k_n$ be the index of the interval containing~$n$, and define 
	$X^n_i$ ($i\geq0$) by~\eqref{Xni}. Then
	\begin{multline}
		\label{boundintervals}
		\P\bigl( \text{for some $\textstyle n\in \bigcup_{k=1}^\infty I_k$ and 
		$i\leq k_n$, $X_i^n \not\in I_{k_n-i}$} \bigr) \\
		\leq \frac{1}{e^{c_1}-1} + \frac{1}{e^{c_2}-1}
	\end{multline}
	where
	\[
		c_1 = \frac{ec_0^2/2}
			{(e-1)\bigl( 1+c_0 s_0 \, \sqrt{e\smash[b]{\null/I^-_0}} \bigr)}
		\quad\text{and}\quad
		c_2 = \frac{ec_0^2/2}
			{e-1 - c_0(2e-1) \bigm/ 3\sqrt{\smash[b]{I^+_0}} },
	\]
	with $s_0$ and~$c_0$ defined as in \eqref{s_0} and~\eqref{c_0}. Note that 
	the right hand side of~\eqref{boundintervals} depends, via $c_0,c_1,c_2$, 
	on the choice of $I^+_0$, $I^-_0$ and~$\gamma$.
\end{lemma}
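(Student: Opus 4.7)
The plan is to bound the event in~\eqref{boundintervals} by a countable union of single-round failure events, one per copy of the coupling, and then invoke Corollaries~\ref{cor:coupling} and~\ref{cor:tailY}.

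The key structural observation is that by~\eqref{Xni}, the $j$-th copy of the coupled $S_n$'s is used by any process started at $n\in I_{k_n}$ (when $k_n\geq j$) at exactly one step, namely to produce $X_{k_n-j+1}^n = S_{X_{k_n-j}^n}^{(j)}$. Thus if the process is ``on track'' in the sense that $X_{k_n-j}^n\in I_j$, it stays on track at the next step precisely when $S_m^{(j)}\in I_{j-1}$ for the specific value $m=X_{k_n-j}^n\in I_j$. Setting
\[
E_j := \bigl\{\,\exists m \in I_j : S_m^{(j)} \notin I_{j-1}\,\bigr\}, \qquad j\geq 1,
\]
a short induction on~$i$ shows that the bad event in~\eqref{boundintervals} is contained in $\bigcup_{j\geq1} E_j$. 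Applying Corollary~\ref{cor:coupling} to the $j$-th copy with $a=\floor{I_j^-}$, $b=\ceil{I_j^+}$, $\alpha=\floor{I_{j-1}^-}$, $\beta=\ceil{I_{j-1}^+}$ and a union bound reduces the problem to estimating the two series $\sum_{j\geq1}\P(Y_{\floor{I_j^-}}\leq\floor{I_{j-1}^-}-1)$ and $\sum_{j\geq1}\P(Y_{\ceil{I_j^+}}\geq\ceil{I_{j-1}^+})$.

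To estimate each summand, note that the definitions~\eqref{left}--\eqref{right} are engineered so that the ``gaps''
\[
\frac{I_j^-}{e}-I_{j-1}^- = c_0\sqrt{\smash[b]{I_0^-}}\,\sqrt{j}\,e^{(j-1)/2}, \qquad I_{j-1}^+ - \frac{I_j^+}{e} = c_0\sqrt{\smash[b]{I_0^+}}\,\sqrt{j}\,e^{(j-1)/2}
\]
telescope to exactly the deviations $u$ needed to align the thresholds $\floor{I_{j-1}^-}-1$ and $\ceil{I_{j-1}^+}$ with the form appearing in Corollary~\ref{cor:tailY}. After absorbing the floor/ceiling corrections and the shifts $-5/3$ and $-3/2$ from Corollary~\ref{cor:tailY} (which contribute the positive constants $1-8/(3e)$ and $1/(2e)$, both pointing in our favor), and using the upper bound $I_j^-\leq I_0^-e^j(1+c_0 s_0\sqrt{e/I_0^-})$ obtained from~\eqref{s_0} together with the obvious $I_j^+\leq I_0^+ e^j$, the factors~$I_0^{\pm}$ and~$e^j$ cancel in the exponents of Corollary~\ref{cor:tailY}, leaving exactly $-c_1 j$ in one case and $-c_2 j$ in the other. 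Summing the two resulting geometric series yields~\eqref{boundintervals}.

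The conceptual argument is short; the difficulty lies in the bookkeeping, especially for the upper tail. The ``$+$'' in the denominator of~\eqref{jansonineq2} introduces an extra term $ue(e-2)/3$ that must be controlled via a uniform bound of the form $u/(n-1)=O(1/\sqrt{\smash[b]{I_0^+}})$; this is precisely what the correction $-c_0(2e-1)/(3\sqrt{\smash[b]{I_0^+}})$ in the denominator of~$c_2$ is tailored to absorb. Verifying that every error term can indeed be absorbed into this correction, so that the final exponent really is at least $c_2 j$ with a $j$-independent~$c_2$, is where most of the care is needed.
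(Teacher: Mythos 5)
Your proposal follows exactly the route of the paper: decompose the bad event into single-round failure events $E_j=\{\exists m\in I_j\colon S_m^{(j)}\notin I_{j-1}\}$, apply Corollary~\ref{cor:coupling} plus a union bound, telescope the definitions \eqref{left}--\eqref{right} to identify the deviations $u$, feed these into Corollary~\ref{cor:tailY}, and sum two geometric series. You correctly note that the floor/ceiling corrections and the $5/3$, $3/2$ shifts push in the favorable direction, and you correctly compute the telescoped gaps $c_0\sqrt{I_0^\pm}\,\sqrt{j}\,e^{(j-1)/2}$.

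There is, however, a genuine computational gap in the upper-tail half. You propose to use the ``obvious'' bound $I_j^+\leq I_0^+e^j$ for the quantity playing the role of $n-1$ in Corollary~\ref{cor:tailY}. Together with $u\leq c_0\sqrt{\smash[b]{I_0^+}}\,e^{j-1}$, this gives a denominator bounded by
\[
I_0^+e^j\Bigl[(e-1)+\frac{c_0(e-2)}{3\sqrt{\smash[b]{I_0^+}}}\Bigr],
\]
and hence an exponent with $(e-1)+c_0(e-2)/(3\sqrt{\smash[b]{I_0^+}})$ in the denominator, \emph{not} the $(e-1)-c_0(2e-1)/(3\sqrt{\smash[b]{I_0^+}})$ that defines~$c_2$. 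Since the former is strictly larger than the latter, your argument as written proves only a strictly weaker bound than~\eqref{boundintervals}. To obtain the stated~$c_2$, one must use the sharper estimate
\[
\ceil{I_j^+}-1 \;\leq\; I_j^+ \;\leq\; I_0^+e^j - c_0\sqrt{\smash[b]{I_0^+}}\,e^j,
\]
which follows from keeping the first term $\sqrt1\,e^{-1/2}$ of the sum $\sum_{i=1}^j\sqrt{i}\,e^{-i/2}$ rather than discarding it. The negative contribution $-c_0\sqrt{\smash[b]{I_0^+}}\,e^j(e-1)$ from $(n-1)(e-1)$ then overcompensates for the positive term $ue(e-2)/3\leq c_0\sqrt{\smash[b]{I_0^+}}\,e^j(e-2)/3$, and the two combine to exactly $-c_0\sqrt{\smash[b]{I_0^+}}\,e^j(2e-1)/3$, producing the minus sign in the denominator of~$c_2$. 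Your vague remark that the correction is ``tailored to absorb'' the extra term names the goal without identifying this mechanism, and the explicit bound you invoke does not achieve it. The lower-tail half is fine, since there the first-order correction in $I_j^-$ works against you and dropping it (i.e.\ keeping the full $1+c_0 s_0\sqrt{e/I_0^-}$) is the correct move, as you do.
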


\begin{proof}
	Let the $S_n^{(k)}$, for $n\geq 2$ and $k\geq 1$, be coupled as in 
	Section~\ref{sec:coupling}. Suppose we are on the event that for all 
	$k\geq 1$ and $n\in I_k$ it holds that $S_n^{(k)} \in I_{k-1}$. Then, by 
	our coupling, it follows that for all $k\geq 1$, $n\in I_k$ and $i\leq k$, 
	$X_i^n \in I_{k-i}$. The latter statement is equivalent to saying that for 
	all $n\in \bigcup_{k=1}^{\infty} I_k$ and $i\leq k_n$, $X_i^n \in 
	I_{k_n-i}$. Therefore, the left hand side of~\eqref{boundintervals} is 
	bounded above by
	\begin{equation}
		\label{proofintervals000}
		\P(\exists k\geq 1, \exists n\in I_k\colon S_n^{(k)} \not\in I_{k-1})
		\leq \sum_{k=1}^{\infty} \P(\exists n\in I_k\colon S_n \not\in 
		I_{k-1}).
	\end{equation}

	By Corollary~\ref{cor:coupling} we have for $k\geq 1$,
	\begin{multline}
		\label{proofintervals1}
		\P(\exists n\in I_k\colon S_n\notin I_{k-1}) \\
		\leq \P\bigl( Y_{\floor{I_k^-}} \leq \floor{I_{k-1}^-} - 1 \bigr) +
		\P\bigl( Y_{\ceil{I_k^+}} \geq \ceil{I_{k-1}^+} \bigr).
	\end{multline}
	To bound the right hand side of~\eqref{proofintervals1}, we use 
	Corollary~\ref{cor:tailY}, which applies for all $k\geq 1$ since $I^-_1 
	\geq eI^-_0 > 4$. We first note that since $\tfrac{1+5/3}{e}-1<0$,
	\begin{equation}
		\label{proofintervals1a}
		\floor{I^-_{k-1}} - 1 - \frac{\floor{I^-_k} - 5/3}{e}
		\leq I^-_{k-1} - \frac{I^-_k}{e}
		= -c_0 \sqrt{\smash[b]{I^-_0}} \, e^{(k-1)/2} \sqrt{k}.
	\end{equation}
	By Corollary~\ref{cor:tailY} with $n = \floor{I_k^-}$ and $-u$ equal to 
	the right hand side of~\eqref{proofintervals1a}, using $\floor{I^-_k} - 1 
	\leq I^-_0 e^k \bigl( 1+c_0 s_0\sqrt{e\smash[b]{\null/I^-_0}} \, \bigr)$, 
	we see that
	\begin{equation}
		\label{proofintervals2}
		\P( Y_{\floor{I_k^-}} \leq \floor{I_{k-1}^-} - 1 )
		\leq \exp\left( -\frac{1}{2} \frac{ec_0^2 k}
			{(e-1)\bigl( 1+c_0 s_0 \, \sqrt{e\smash[b]{\null/I^-_0}} \bigr)} 
			\right).
	\end{equation}
	Likewise,
	\[
		\ceil{I^+_{k-1}} - \frac{\ceil{I^+_k} - 3/2}{e}
		\geq I^+_{k-1} - \frac{I^+_k}{e}
		= c_0 \sqrt{\smash[b]{I^+_0}} \, e^{(k-1)/2} \sqrt{k}.
	\]
	By Corollary~\ref{cor:tailY}, using $\ceil{I^+_k}-1 \leq I^+_0 e^k - c_0 
	e^k \sqrt{\smash[b]{I^+_0}}$ and $e^{(k-1)/2} \sqrt{k} \leq e^{k-1}$, 
	\begin{equation}
		\label{proofintervals3}
		\P( Y_{\ceil{I_k^+}} \geq \ceil{I_{k-1}^+} )
		\leq \exp\left( -\frac{1}{2} \frac{ec_0^2 k}
			{e-1 - c_0 (2e-1) \bigm/ 3\sqrt{\smash[b]{I^+_0}} } \right).
	\end{equation}
	By \eqref{proofintervals1}, \eqref{proofintervals2} 
	and~\eqref{proofintervals3}, the right hand side 
	of~\eqref{proofintervals000} is bounded above by the sums over all 
	$k\geq1$ of the right hand sides of \eqref{proofintervals2} 
	and~\eqref{proofintervals3}, added together. This 
	proves~\eqref{boundintervals}.
\end{proof}

\section{Non-convergence}
\label{sec:nonconvergence}

In this section we prove non-convergence of the~$p_n$:

\begin{theorem}[Non-convergence]
	\label{thm:nonconvergence}
	It is the case that
	\[
		\limsup_{n \to \infty} p_n \geq \supf
		\qquad\text{and}\qquad
		\liminf_{n \to \infty} p_n \leq \inff.
	\]
\end{theorem}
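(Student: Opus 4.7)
The plan is to combine rigorous \Mathematica-based computations with the interval-propagation result of Lemma~\ref{lem:intervals}. First, using the recursion
\[
	p_n = \sum_{k=0}^{n-2} \P(S_n=k)\, p_k, \qquad p_0=1,\;p_1=0,
\]
together with the explicit formula~\eqref{Sn-incl-excl}, I would compute rational upper and lower bounds $p_n^-\leq p_n \leq p_n^+$ for $n$ up to several thousand, using only integer arithmetic so that no rounding errors enter (this is the content of the later Lemma~\ref{lem:mathematica}). From these bounds I would identify a \emph{hill} interval $H_0 = [\floor{I^-_0},\ceil{I^+_0}]$ on which $\min_{m\in H_0} p_m^-$ strictly exceeds $\supf$, and a \emph{valley} interval $V_0$ on which $\max_{m\in V_0} p_m^+$ lies strictly below $\inff$, each with a small safety margin $\delta>0$.

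Next, I would feed $H_0$ and $V_0$ (each with a suitably chosen $\gamma\in(0,1]$) into Lemma~\ref{lem:intervals}. This produces two families of intervals $H_k$ and $V_k$ ($k\geq0$), spaced asymptotically by a factor of $e$ on the linear scale, together with a uniform bound
\[
	\P(E_n^c) \;\leq\; \epsilon \;:=\; \frac{1}{e^{c_1}-1}+\frac{1}{e^{c_2}-1}
\]
for the event $E_n$ that the coupled shooting process started at $n\in H_k$ (respectively $V_k$) visits $H_{k-1},\dots,H_0$ (resp.\ $V_{k-1},\dots,V_0$) in its first $k$ rounds, so that in particular $X^n_k\in H_0$ (resp.\ $V_0$). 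Conditioning on $X^n_k$ and using the Markov property of $(X^n_i)_i$, for any $n\in H_k$,
\[
	p_n \;=\; \E\bigl[p_{X^n_k}\bigr] \;\geq\; \P(E_n)\min_{m\in H_0}p_m \;\geq\; (1-\epsilon)(\supf+\delta),
\]
and for any $n\in V_k$,
\[
	p_n \;\leq\; \P(E_n^c) + \P(E_n)\max_{m\in V_0}p_m \;\leq\; \epsilon + (\inff-\delta).
\]
Since each family $\{H_k\}$ and $\{V_k\}$ extends to infinity, once $\epsilon$ satisfies $\epsilon<\delta/(\supf+\delta)$ and $\epsilon<\delta$ these inequalities supply two disjoint infinite subsequences of $n$ along which $p_n\geq\supf$ and $p_n\leq\inff$ respectively, giving both halves of the theorem.

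The main obstacle is the joint tuning of $H_0,V_0$ and $\gamma$. Enlarging $I^+_0-I^-_0$ or pushing $\gamma$ up to $1$ increases $c_0$ and shrinks $\epsilon$, but widening the hill or valley erodes the uniform numerical bounds $\min p_m^-$ and $\max p_m^+$ and hence the margin $\delta$. One therefore wants to locate $H_0$ and $V_0$ far enough to the right on the $\log n$ scale that (i) the \Mathematica\ bounds on $p_n$ inside them are tight enough to yield a nontrivial $\delta$, and (ii) the intervals can be wide in absolute terms (since we sit high up) while still being narrow on the $\log n$ scale. Verifying that the specific numerical targets $\supf$ and $\inff$ are achievable thus reduces to exhibiting suitable $H_0$, $V_0$ and $\gamma$ inside the range $n\leq 6000$ covered by Lemma~\ref{lem:mathematica}: a purely computational but delicate step, since the margins in play are only a few thousandths.
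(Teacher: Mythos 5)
Your overall architecture — identify a hill $H_0$ and a valley $V_0$ from rigorous numerics, push outward via the interval construction, then transfer bounds back to $p_n$ for $n$ in the far intervals — is exactly the paper's, and your decomposition $p_n \geq \P(E_n)\min_{H_0}p_m$ and $p_n\leq \P(E_n^c)+\P(E_n)\max_{V_0}p_m$ is the right one. But there is a concrete gap: you propose feeding $H_0$ and $V_0$ \emph{directly} into Lemma~\ref{lem:intervals}, and that does not come close to the required error. With $H_0=[2479,3151]$ and $\gamma=1$ one gets $c_0=(\sqrt{3151}-\sqrt{2479})/(s_0\sqrt{e})\approx 1.66$, hence $c_1\approx 1.9$ and $c_2\approx 2.2$, so the bound of Lemma~\ref{lem:intervals} is $\frac{1}{e^{c_1}-1}+\frac{1}{e^{c_2}-1}\approx 0.29$ — two orders of magnitude too large. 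The margin $\delta$ you have from the numerics (roughly $0.001$) cannot absorb this, and because the \Mathematica\ computation only reaches $n=6000$, you cannot move $H_0,V_0$ farther right, nor widen them without swallowing the neighbouring valley.

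The missing idea is a bridge between the scales at which each tool is sharp. The Chernoff-type bound underlying Corollary~\ref{cor:tailY} is loose by a constant factor (its variance proxy $(n-1)(e-1)/e^2$ overestimates $\Var Y_n\approx n(e-2)/e^2$ by about $2.4\times$) and this hurts badly when $c_0$ is only around $1.6$; it becomes acceptable only once $I_0^-$ is on the order of $10^4$--$10^5$, where $\sqrt{I_0^+}-\sqrt{I_0^-}$ is large even for a fixed ratio $I_0^+/I_0^-$. The paper therefore hand-builds $H_1,H_2,H_3$ (and $V_1,V_2$) by placing each boundary roughly $3.56$ standard deviations from the previous interval, and then computes the transition tail probabilities $\P(Y_{H_k^-}\leq H_{k-1}^--1)$, $\P(Y_{H_k^+}\geq H_{k-1}^+)$ for $k=1,2,3$ \emph{exactly}, via the inclusion--exclusion formula for the law of $Y_n$ — this is the second half of Lemma~\ref{lem:mathematica}, which your proposal omits. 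These exact tails sum to about $0.001$, and only then is the generic Lemma~\ref{lem:intervals} construction started from $I_0:=H_3=[53501,59301]$, where $c_0\approx 3.2$ and the Chernoff tail contributes a further $\approx 0.0007$. Without this intermediate exact-numerics step, the two bounds on $\P(G_n^c)$ cannot be made to meet the numerical targets $\supf$ and $\inff$.
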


The idea of the proof of Theorem \ref{thm:nonconvergence} is as follows. We 
will take intervals $H_0$ and~$V_0$ around $n=2795$ and~$n=4608$, the last 
peak and valley in Figure~\ref{figure1}, respectively, so that $p_n$ is high 
on~$H_0$ and low on~$V_0$. Then we will construct sequences of intervals $H_1, 
H_2, \dotsc$ and $V_1, V_2, \dotsc$ in such a way that, if $n\in H_k$ for 
some~$k$, then with high probability uniformly in~$k$, the number of survivors 
in the shooting process starting with $n$~persons will, during the first~$k$ 
shooting rounds, visit each of the intervals $H_{k-1}, H_{k-2}, \dots, H_0$ 
(in that order), and similar for~$V_k$. As a consequence, $p_n$ must be high 
on all intervals~$H_k$, and low on all intervals~$V_k$.

To make this work, the intervals $H_0$ and~$V_0$ should be big enough to make 
the probability high that the number of survivors after $k$~rounds will lie in 
them when we start from $H_k$ or~$V_k$, but small enough so that the values 
taken by the~$p_n$ on the respective intervals $H_0$ and~$V_0$ are 
sufficiently separated from each other. It turns out that $H_0 = [2479, 3151]$ 
and $V_0 = [4129, 5143]$ work, and these intervals form our starting point.

The next three intervals $H_1$, $H_2$ and~$H_3$ are constructed as follows. We 
choose the right boundary~$H^+_1$ of~$H_1$ such that $\E S_{H_1^{\smash+}}$ 
lies roughly $3.56$~standard deviations away from the right boundary of~$H_0$, 
and we choose the left boundary~$H_1^-$ of~$H_1$ similarly. In this way, we 
expect that after one shooting round we will end up in~$H_0$ with high 
probability, when we start in~$H_1$. The intervals $H_2$ and~$H_3$ are 
constructed similarly, and so are the intervals $V_1$ and~$V_2$. We need this 
special treatment only for two (instead of three) intervals $V_1$ and~$V_2$, 
because $V_0$ lies to the right of~$H_0$. We end up with
\begin{align*}
	H_0 &= [2479, 3151],	& V_0 &= [4129, 5143],\\
	H_1 &= [6991, 8290],	& V_1 &= [11553, 13623], \\
	H_2 &= [19425, 22086],	& V_2 &= [31952, 36447], \\
	H_3 &= [53501, 59301].
\end{align*}

The remaining intervals are now constructed as explained in 
Section~\ref{sec:intervals}, taking $H_3$ and~$V_2$ as the respective starting 
intervals. To be more precise, we first set $I_0 := H_3$, take $\gamma=1$, and 
then for $k\geq4$ define the intervals $H_k = [H^-_k, H^+_k] := 
[\floor{I^-_{k-3}}, \ceil{I^+_{k-3}}]$ using equations \eqref{left} 
and~\eqref{right} for the endpoints. In the same way we define the intervals 
$V_k$ for $k\geq3$, taking $I_0 := V_2$ as the initial interval in the 
construction from Section~\ref{sec:intervals}.

The following lemma tells us that the values of the~$p_n$ on the intervals 
$H_0$ and~$V_0$ are sufficiently separated from each other and that, when we 
start in~$H_3$, the number of survivors in the shooting process will visit 
each of the intervals $H_2, H_1, H_0$ with high probability, and similarly for 
$V_2, V_1, V_0$. We obtain the desired bounds using computations in 
\Mathematica. We explain how we can perform the computations in such a way 
that we avoid introducing rounding errors, and thus obtain rigorous results.

\begin{lemma}
	\label{lem:mathematica}
	We have
	\begin{align*}
		\min\{ p_n\colon n \in H_0\} &\geq 0.5163652651, \\
		\max\{ p_n\colon n \in V_0\} &\leq 0.4767018688,
	\end{align*}
	and moreover
	\begin{align*}
		\sum_{k=1}^3 \left[ \P(Y_{H_k^-} \leq H_{k-1}^- - 1) + \P(Y_{H_k^+} 
		\geq H_{k-1}^+) \right] &\leq 0.0010954222, \\
		\sum_{k=1}^2 \left[ \P(Y_{V_k^-} \leq V_{k-1}^- - 1) + \P(Y_{V_k^+} 
		\geq V_{k-1}^+) \right] &\leq 0.0006060062.
	\end{align*}
\end{lemma}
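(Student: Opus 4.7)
The plan is to establish all four bounds by exact rational arithmetic with certified rounding, as promised in the introduction. Via formula \eqref{Sn-incl-excl}, each $\P(S_n = k)$ is an explicit rational number. Combined with the one-step recursion
\[
	p_n = \sum_{k=0}^{n-1} \P(S_n = k) \, p_k \qquad (n \geq 2),
\]
and the initial conditions $p_0 = 1$ and $p_1 = 0$ (a lone survivor never dies), this determines every $p_n$ as a rational number computable from the $p_k$ with $k < n$. Likewise, inclusion-exclusion on the events ``a prescribed set of boxes is empty'' gives
\[
	\P(Y_n = k) = \binom{n-1}{k} \sum_{j=0}^{n-1-k} (-1)^j \binom{n-1-k}{j} \left( \frac{n-1-k-j}{n-1} \right)^{n-1},
\]
so each tail probability $\P(Y_n \leq a)$, $\P(Y_n \geq b)$ is also an explicit finite rational sum. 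The lemma thus reduces to a finite computation followed by comparison with the stated decimals.

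To keep the representations tractable I would, instead of storing the exact rational $p_n$ (whose denominator grows roughly like $(n-1)^n$), propagate certified bounds $L_n \leq p_n \leq U_n$ at a fixed working precision of $M$ decimal digits. Start from $L_0 = U_0 = 1$ and $L_1 = U_1 = 0$. For $n \geq 2$, evaluate $\sum_k \P(S_n = k) L_k$ and round the result down to $M$ digits to obtain $L_n$, and do the analogous upward rounding for $U_n$. Because $\sum_k \P(S_n = k) = 1$, each step introduces an additional error of at most $10^{-M}$, so a modest $M$ (say, a dozen digits beyond the ten digits appearing in the lemma) is comfortably sufficient up to $n = \max V_0 = 5143$. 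The first two inequalities then reduce to the integer comparisons $\min_{n \in H_0} L_n \geq 0.5163652651$ and $\max_{n \in V_0} U_n \leq 0.4767018688$. For the two tail-probability sums, one evaluates the explicit formula for $\P(Y_n = k)$ above (with upward rounding, to yield a genuine upper bound) at the ten required values $n = H_k^\pm$ with $k = 1, 2, 3$ and $n = V_k^\pm$ with $k = 1, 2$, sums the appropriate tails, and again compares against the stated decimal.

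The only real obstacle is computational rather than conceptual: the alternating sums in \eqref{Sn-incl-excl} and in the formula for $\P(Y_n = k)$ involve large terms that cancel catastrophically in floating-point arithmetic, and at $n \sim 6\cdot 10^4$ the exact integer numerators and denominators have tens of thousands of digits. This is precisely why the computation must proceed in exact integer arithmetic with controlled directional rounding at each step, and why \Mathematica\ (with its built-in arbitrary-precision rational facilities) is used. Once organised this way, every rounding step is certified by a pure integer comparison, so the final verification of each of the four stated inequalities is itself an integer comparison, and the argument is rigorous despite its reliance on a computer algebra system.
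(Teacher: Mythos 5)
Your proposal matches the paper's approach: both establish the four bounds by propagating certified rational bounds on the $p_n$ through the recursion, using exact integer arithmetic with directional rounding, and by evaluating the $Y_n$ tail probabilities via inclusion--exclusion in exact arithmetic. Two small remarks on details the paper handles differently: since $S_n \leq n-2$ almost surely, the recursion runs only to $k=n-2$, and the paper caps the cost of the inner inclusion--exclusion by truncating the alternating series after an even number of terms (a valid lower bound) and by restricting $k$ to a window of width $\sim \sqrt{5n}$ about $n/e$ --- these are efficiency measures, not correctness requirements, so your full-sum variant is sound, just slower.
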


\begin{proof}
	The explicit bounds in the first part of Lemma~\ref{lem:mathematica} are 
	based on exact calculations in \Mathematica\ of bounds on the 
	numbers~$p_n$ up to $n=6000$ using the recursion
	\begin{equation}
		\label{recursion}
		p_n = \sum_{k=0}^{n-2} \P(S_n=k)\,p_k \qquad (n\geq2),
	\end{equation}
	with $p_0=1$ and $p_1=0$. To obtain lower bounds on~$p_n$ from this 
	recursion, we need lower bounds on the $\P(S_n=k)$. To this end, write
	\[
		t^n_{k,r} = \binom{n}{k} \binom{n-k}{r} (n-k-r)^{k+r} 
		(n-k-r-1)^{n-k-r}
	\]
	for the terms that appear in the inclusion-exclusion 
	formula~\eqref{Sn-incl-excl}. Observe that these are integer numbers. Now, 
	for fixed $n$ and~$k$, define $r_{\max}$ by
	\[
		r_{\max} := \min\{ r\geq0 \colon 10^{10} t^n_{k,2r} < (n-1)^n \}.
	\]
	Since truncating the sum in the inclusion-exclusion formula after an even 
	number of terms yields a lower bound on~$\P(S_n=k)$, we have that
	\[
		\P(S_n=k) \geq (n-1)^{-n} \sum_{r=0}^{2r_{\max}-1} (-1)^r t^n_{k,r}.
	\]
	By our choice of~$r_{\max}$, we know that the difference between the left 
	and right hand sides of this inequality is smaller than~$10^{-10}$.

	However, this rational lower bound on~$\P(S_n=k)$ is numerically awkward 
	to work with, because the numerator and denominator become huge for 
	large~$n$. We therefore bound $\P(S_n=k)$ further by the largest smaller 
	rational number of the form $m/10^{10}$ with $m\in\N$. Stated in a 
	different way, we bound the quantity $10^{10} \P(S_n=k)$ from below by the 
	integer
	\[
		P_{n,k} := 0 \vee \floor[\bigg]{ 10^{10} \sum_{r=0}^{2r_{\max}-1} 
		(-1)^r t^n_{k,r} \biggm/ (n-1)^n },
	\]
	where we remark that for integers $a$ and~$b$, $\floor{a/b}$ is just the 
	quotient of the integer division~$a/b$.

	We now return to~\eqref{recursion}. Suppose that we are given nonnegative 
	integers $\hat{p}_0, \hat{p}_1, \dots, \hat{p}_{n-1}$ that satisfy 
	$10^{10}p_k \geq \hat{p}_k$ for $k = 0,1,\dots,n-1$. Let
	\[
		\hat{p}_n := \floor[\bigg]{\, \sum_{k = k_1}^{k_2} P_{n,k} \hat{p}_k 
		\biggm/ 10^{10} },
	\]
	where
	\[
		k_1 = 0 \vee \ceil[\big]{n/e - \sqrt{5n}\,}, \quad
		k_2 = (n-2) \wedge \floor[\big]{n/e + \sqrt{5n}\,}.
	\]
	Then it follows from~\eqref{recursion} and the fact that $10^{10} 
	\P(S_n=k) \geq P_{n,k}$, that $10^{10}p_n \geq \hat{p}_n$. In this way, 
	starting from the values $\hat{p}_0 = 10^{10}$ and $\hat{p}_1 = 0$, we 
	recursively compute integer lower bounds on the numbers $10^{10} p_n$, or 
	equivalently, rational lower bounds on~$p_n$, up to $n=6000$. We emphasize 
	that this procedure involves only integer calculations, that could in 
	principle be done by hand. For practical reasons, we invoke the aid of 
	\Mathematica\ to perform these calculations for us, using exact integer 
	arithmetic.

	In the same way (now starting the recursion from $\hat{p}_0 = 0$ and 
	$\hat{p}_1 = 10^{10}$), we compute exact bounds on the probabilities 
	$1-p_n$ of ending up with a single survivor. Taking complements, this 
	gives us rational upper bounds on the~$p_n$ up to $n = 6000$. The first 
	part of Lemma~\ref{lem:mathematica} follows from these exact bounds, and 
	Figure~\ref{figure1} shows the lower bounds as a function of~$\log n$. As 
	it turns out, the largest difference between our upper and lower bounds on 
	the~$p_n$ is~$527 \times 10^{-10}$.

	The second part of Lemma~\ref{lem:mathematica} again follows from exact 
	integer calculations with the aid of \Mathematica. Inclusion-exclusion 
	tells us that
	\begin{equation}
		\label{Yincl-excl}
		\P(Y_{n+1} = k+i) = \frac1{n^n} \sum_{r=i}^{n-k} (-1)^{r-i} 
		\binom{n}{k+i} \binom{n-k-i}{r-i} (n-k-r)^n.
	\end{equation}
	Summing over $i = 0,\dots,n-k$, interchanging the order of summation, and 
	reorganising the binomial coefficients yields
	\[
		\P(Y_{n+1} \geq k) = \frac1{n^n} \sum_{r=0}^{n-k} (-1)^r \sum_{i=0}^r 
		(-1)^i \binom{k+r}{k+i} \binom{n}{k+r} (n-k-r)^n.
	\]
	Using the binomial identity
	\[
		\sum_{i=0}^r (-1)^i \binom{k+r}{k+i} = \binom{k+r-1}{r}
		\qquad (k\geq1, r\geq0),
	\]
	which is easily proved by induction in~$r$, we conclude that for $k \geq 
	1$,
	\[
		\P(Y_{n+1} \geq k) = \frac1{n^n} \sum_{r=0}^{n-k} (-1)^r \frac{k}{k+r} 
		\binom{n}{k} \binom{n-k}{r} (n-k-r)^n.
	\]
	Since $\P(Y_{n+1} \leq k) = 1 - \P(Y_{n+1} \geq k) + \P(Y_{n+1} = k)$, the 
	previous equation together with~\eqref{Yincl-excl} for $i=0$ gives
	\[
		\P(Y_{n+1} \leq k) = 1 + \frac1{n^n} \sum_{r=0}^{n-k} (-1)^r 
		\frac{r}{k+r} \binom{n}{k} \binom{n-k}{r} (n-k-r)^n.
	\]

	We note that from our derivation it follows that, as before, the terms 
	that appear in the sums above are integers. This allows us to compute the 
	rational numbers $\P(Y_{n+1} \leq k)$ and $\P(Y_{n+1} \geq k)$, and hence 
	the sums in the second part of Lemma~\ref{lem:mathematica}, using only 
	exact integer arithmetic. Bounding these sums above by rational numbers of 
	the form $m/10^{10}$ (which again involves only integer arithmetic) yields 
	the second part of Lemma~\ref{lem:mathematica}.
\end{proof}

\begin{proof}[Proof of Theorem~\ref{thm:nonconvergence}.]
	Let the intervals~$H_k$, for $k\geq 0$, be constructed as explained below 
	the statement of Theorem~\ref{thm:nonconvergence}. Similarly as in 
	Section~\ref{sec:intervals}, for $n\in \bigcup_{k=1}^\infty H_k$ we now 
	define $X^n_i$ by~\eqref{Xni}, with $k_n$ equal to the value of~$k$ such 
	that $n\in H_k$. Recall that $X^n_i$ represents the number of survivors 
	after round~$i$ of the shooting process started from~$n$. Fix a $k\geq1$ 
	and $n\in H_k$. We are interested in the event
	\[
		G_n	= \{ \text{$X_i^n\in H_{k-i}$ for all $i=1,\dots,k$} \}.
	\]
	It follows from
	\[\begin{split}
		\P(G_n^c)
		& = \P(\exists i\leq k\colon X_i^n \not\in H_{k-i} ) \\
		& \leq \P(\exists i\leq k-3\colon X_i^n \not\in H_{k-i} )
			+ \sum_{k=1}^3 \P( \exists m\in H_k\colon S_m\not\in H_{k-1})
	\end{split}\]
	and Corollary~\ref{cor:coupling}, that $\P(G_n^c)$ is bounded from above 
	by
	\begin{multline}
		\label{nonconv10}
		\P(\exists i\leq k-3\colon X_i^n \not\in H_{k-i} ) \\
		+ \sum_{k=1}^3 \left[ \P\bigl( Y_{H_k^-} \leq H_{k-1}^- - 1 \bigr) + 
		\P\bigl( Y_{H_k^+} \geq H_{k-1}^+ \bigr) \right].
	\end{multline}

	We use Lemma~\ref{lem:intervals} to compute an upper bound on the first 
	term in~\eqref{nonconv10}, and Lemma~\ref{lem:mathematica} to bound the 
	sum in the second term. This gives
	\[
		\P(G_n^c) \leq  0.0007188677 + 0.0010954222 = 0.0018142899,
	\]
	uniformly for all $k\geq1$ and $n\in H_k$. Using the first part of 
	Lemma~\ref{lem:mathematica}, this gives
	\[
		p_n \geq \P(G_n) \min_{m\in H_0} p_m
		\geq 0.9981857101 \times 0.5163652651 \geq \supf,
	\]
	for all $n\in \bigcup_{k=1}^\infty H_k$. In a similar way, we bound the 
	values $1-p_n$ from below, and hence the~$p_n$ from above, on the 
	intervals~$V_k$.
\end{proof}

\section{Periodicity and continuity}
\label{sec:periodicity}

\subsection{Main theorem}
\label{sec:maintheorem}

In this section we prove the convergence of the~$p_n$ on the $\log n$~scale to 
a periodic and continuous function~$f$. Together with 
Theorem~\ref{thm:nonconvergence} (non-convergence), this gives 
Theorem~\ref{thm:main}.

\begin{theorem}[Asymptotic periodicity and continuity]
	\label{thm:asymptotics}
	There exists a periodic and continuous function $f\colon \R\to[0,1]$ of 
	period~1 such that
	\[
		\sup_{x\geq x_0} \, \abs[\big]{ p_{\floor{\exp x}} - f(x) } \to 0
		\qquad \text{as }x_0\to\infty.
	\]
\end{theorem}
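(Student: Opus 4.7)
The plan is to combine the coupling of Section~\ref{sec:coupling} with the interval construction of Section~\ref{sec:intervals} to establish two oscillation bounds on the sequence $(p_n)$: one within each interval $I_k$ (continuity), and one between adjacent intervals $I_k, I_{k+1}$ (periodicity). Given these, the limit function $f$ should emerge from a Cauchy-sequence argument on the $\log n$ scale, with $f$ defined pointwise as $f(x) = \lim_k p_{\lfloor e^{x+k} \rfloor}$.

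For the continuity estimate, given $\epsilon > 0$ I would apply Lemma~\ref{lem:intervals} with $I^-_0, I^+_0, \gamma$ chosen so that $[I^-_0, I^+_0]$ is narrow on the log scale (i.e.\ $\log(I^+_0/I^-_0) = O(\epsilon)$), $I^-_0$ is large enough that $c_1, c_2$ in Lemma~\ref{lem:intervals} are large, and the right-hand side of \eqref{boundintervals} is at most $\epsilon$. For $m < n$ in the same $I_k$, couple the shooting processes $X^m$ and $X^n$ via \eqref{Xni} with $k_m = k_n = k$. On the good event of Lemma~\ref{lem:intervals} (probability $\geq 1 - \epsilon$) both processes visit $I_{k-i}$ after $i$ rounds for every $i \leq k$; by the near-monotonicity in Lemma~\ref{lem:coupling} the difference $X^n_i - X^m_i$ is controlled by the width of $I_{k-i}$ and contracts geometrically, reaching $O(I^+_0 - I^-_0)$ after $k$ rounds. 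A further, bounded number of rounds inside the fixed interval $I_0$ should then drive the two coupled processes into a common state before either of them reaches $\{0, 1\}$, again with probability $\geq 1 - \epsilon$. Since coupled processes that meet stay equal forever, this yields $|p_m - p_n| \leq \P(X^m_\infty \neq X^n_\infty) = O(\epsilon)$, uniformly for $m, n \in I_k$ and all large $k$.

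For the periodicity step I would repeat this analysis for $m \in I_k$ and $n \in I_{k+1}$, but now couple via \eqref{Xni} with $k_m = k$ and $k_n = k+1$. By construction, $X^n_1$ is a single, independent shooting round started at $n$, which by Corollary~\ref{cor:coupling} together with Corollary~\ref{cor:tailY} lies in $I_k$ with probability $\geq 1 - \epsilon$; from round $2$ onward the $n$-process is fully coupled with the $m$-process, reducing matters to the continuity step and yielding $|p_m - p_n| = O(\epsilon)$. To assemble $f$: for fixed $x \in \R$ I pick $I^-_0, I^+_0$ straddling $e^x$ narrowly, so that $\lfloor e^{x+k} \rfloor \in I_k$ for all sufficiently large $k$. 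The periodicity step then shows that $p_{\lfloor e^{x+k}\rfloor}$ is Cauchy in $k$, so $f(x)$ may be defined as its limit. Periodicity $f(x+1) = f(x)$ is immediate from the shift structure of the intervals, continuity of $f$ follows from the continuity step applied to two starting points $x, x'$ that are both swept into a common $I_k$, and the bound from Lemma~\ref{lem:intervals} (uniform in $k$) yields the claimed uniform convergence on $[x_0, \infty)$ as $x_0 \to \infty$.

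The main obstacle I foresee is the endgame inside $I_0$. Lemma~\ref{lem:intervals} controls the processes only while they are large and spread across the constructed intervals; once they arrive in the fixed bounded interval $I_0$ with some residual difference $O(I^+_0 - I^-_0)$, I still need a quantitative statement that the coupled processes actually collide before either hits $\{0, 1\}$. This should follow from a further coupling argument exploiting the explicit construction of Section~\ref{sec:coupling}, together with a lower bound on the probability of meeting at each round inside $I_0$; but ensuring that the resulting estimate is uniform in $k$ and in the choice of starting points $m, n$, and integrates cleanly with the tail bounds controlling the excursion through the intervals $I_k, \dots, I_1$, is the delicate part of the argument.
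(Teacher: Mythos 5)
Your proposal tracks the paper's high-level strategy — use the coupling of Section~\ref{sec:coupling} to run two shooting processes down through a sequence of shrinking intervals, then collide them — and the treatment of periodicity via offsetting $k_m$ and $k_n$ in \eqref{Xni} is exactly right. But you have correctly identified, and not filled, the gap that is the actual mathematical substance of the paper's proof: the quantitative collision argument after both processes reach $J_0$.

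The issue is that Lemma~\ref{lem:intervals} and interval membership only bound the difference $|X^n_i - X^m_i|$ by the width of the interval the processes currently occupy. After $k$ rounds the processes land in $J_0$, whose width (if you take $\log(I_0^+/I_0^-)=O(\epsilon)$) is of order $\epsilon \cdot I_0^-$, a number that grows without bound as $I_0^-\to\infty$. Your phrase ``a further, bounded number of rounds inside the fixed interval $I_0$'' is not accurate: to bring a difference of size $\epsilon I_0^-$ down to $O(1)$, you need $\Theta(\log I_0^-)$ rounds, during which the process value itself drops by a factor of order $e^{\Theta(\log I_0^-)}$, i.e.\ polynomially in $I_0^-$. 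Whether the two processes collide before one of them hits $\{0,1\}$ is therefore a genuinely delicate quantitative question, and Lemma~\ref{lem:coupling}'s near-monotonicity does nothing to resolve it (that lemma only prevents the gap from growing by more than 1 per round; it gives no contraction).

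The paper fills this gap with three separate pieces of machinery you would need to supply. First, $J_0$ is given width of order $(\text{left endpoint})^{2/3}$ (not $\epsilon\cdot\text{left endpoint}$) precisely so that the hypothesis $b-a\leq a^{2/3}$ of Lemma~\ref{lem:ingr2} is met; in your setup this forces $\epsilon$ to shrink like $(I_0^-)^{-1/3}$, which you did not stipulate. Second, Lemma~\ref{lem:ingr2} gives a genuine contraction: with probability $\geq 1 - c_1 e^{-c_2(b-a)}$, a single round halves $S_b-S_a$. Iterating this (Proposition~\ref{prop:ingr2}), with careful bookkeeping to ensure the processes stay above $a^{0.01}$ while the gap shrinks to a constant $d$, is a non-trivial argument with a union-bound over the geometrically decreasing gap sizes. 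Third, Lemma~\ref{lem:ingr1} gives the collision probability $\P(S_a=S_b)\geq e^{-7(b-a)}$ once the gap is $O(1)$, and Proposition~\ref{prop:ingr1} amplifies this over $\exp(O(d))$ rounds while simultaneously controlling the (small) probability that the gap re-inflates past $2d_0$ or that the processes drop too low. None of this follows from Lemma~\ref{lem:intervals} or Lemma~\ref{lem:coupling} alone, and these estimates — rather than the Cauchy-sequence scaffolding you do describe correctly — are where the real work lies.
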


To prove Theorem~\ref{thm:asymptotics}, we consider coupled shooting processes 
started from different points that lie in one of the intervals
\begin{align}
	\label{J0}
	J_0 &= \bigl[ e^{k_0+w-3\delta},
				e^{k_0+w-3\delta} + (e^{k_0+w-3\delta})^{2/3} \bigr], \\
	\label{Jk}
	J_k &= \bigl[ e^{k_0+w+k-\delta}, e^{k_0+w+k+\delta} \bigr],
			\qquad k\geq 1,
\end{align}
for some $k_0$, $w$ and~$\delta$ specified in Proposition~\ref{prop:ingr3} 
below. Observe that the intervals~$J_k$ for $k\geq1$ have length~$2\delta$ on 
the $\log n$ scale. We will show in three steps that with high probability, 
the distance between the numbers of survivors in these shooting processes 
decreases, and the coupled processes collide before the number of survivors 
has reached 0 or~1. The three steps are respectively described by Propositions 
\ref{prop:ingr3}, \ref{prop:ingr2} and \ref{prop:ingr1} below.

\begin{proposition}
	\label{prop:ingr3}
	For all $\varepsilon>0$ and~$a_2$ there exist $\delta\in(0,\tfrac13)$ and 
	$k_0 > 1+\log a_2$ such that, with the intervals~$J_k$ as in \eqref{J0} 
	and~\eqref{Jk},
	\[
		\inf_{w\in[0,1]}  \P\bigl( \text{for all $\textstyle n \in 
		\bigcup_{k=1}^{\infty} J_k$, $X^n_{k_n} \in J_0$} \bigr)
		\geq 1-\varepsilon,
	\]
	where for all $n\in \bigcup_{k=1}^\infty J_k$ and $i\geq0$, $X^n_i$ is 
	defined by~\eqref{Xni}, with $k_n$ equal to the value of~$k$ such that 
	$n\in J_k$.
\end{proposition}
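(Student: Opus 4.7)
For $n \in J_k$ with $k \geq 2$, I would split the $k$ shooting rounds into two phases: a cascade of $k-1$ rounds that brings the process from $J_k$ into a slight enlargement $\tilde J_1 \supseteq J_1$, handled by Lemma~\ref{lem:intervals}, followed by a single final round that lands in the narrow target $J_0$, handled by Corollaries~\ref{cor:coupling} and~\ref{cor:tailY}. The case $k = 1$ is subsumed by the final-round argument.

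The parameter $\delta$ must be chosen in a narrow range dictated by the final round. Starting from $n \in J_1$, $\E Y_n$ is essentially $n/e$, lying in $[e^{k_0+w-\delta}, e^{k_0+w+\delta}]$, while $J_0 = [e^{k_0+w-3\delta}, e^{k_0+w-3\delta} + e^{(2/3)(k_0+w-3\delta)}]$. The gap $\E Y_{J_1^-} - J_0^- \approx 2\delta \, e^{k_0+w}$ must exceed the standard deviation of order $e^{(k_0+w)/2}$ for Corollary~\ref{cor:tailY} to yield exponentially small lower-tail probability, which forces $\delta$ to be at least of order $e^{-(k_0+w)/2}$. Symmetrically, $J_0^+ - \E Y_{J_1^+} \approx e^{(2/3)(k_0+w-3\delta)} - 4\delta \, e^{k_0+w}$ must be positive and exceed the standard deviation, which forces $\delta$ to be at most of order $e^{-(k_0+w)/3}$. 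For $k_0$ sufficiently large (depending on $\varepsilon$ and $a_2$), a $w$-uniform $\delta$ in the window $(C_1 e^{-k_0/2}, C_2 e^{-(k_0+1)/3})$ exists.

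For the cascade, I would apply Lemma~\ref{lem:intervals} with $I_0 = \tilde J_1$, where $\tilde J_1$ is $J_1$ enlarged by a margin of order $e^{(k_0+w)/2}$ on each side. The widening from~\eqref{left}--\eqref{right} at level $k-1$ is of order $e^{(k_0+w)/2 + k}$, which is smaller than the linear width of $J_k$ (of order $\delta \, e^{k_0+w+k}$) by our lower bound on $\delta$; hence the resulting interval $I_{k-1}$ contains $J_k$ for every $k \geq 1$. Lemma~\ref{lem:intervals} then bounds the cascade-failure probability by $1/(e^{c_1} - 1) + 1/(e^{c_2} - 1) < \varepsilon/2$, achievable by choosing $c_0$ large. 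For the final round, Corollary~\ref{cor:coupling} gives
\[
\P(\exists m \in \tilde J_1 \colon S_m \notin J_0)
\leq \P(Y_{\tilde J_1^-} \leq J_0^- - 1) + \P(Y_{\tilde J_1^+} \geq J_0^+),
\]
and the two tail terms are controlled by Corollary~\ref{cor:tailY} using the gaps identified above, giving total contribution $< \varepsilon/2$. Monotonicity from Lemma~\ref{lem:coupling} then yields the conclusion uniformly in $w \in [0,1]$.

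The main obstacle is that the allowed window for $\delta$ collapses as $k_0$ grows, with lower and upper bounds tending to zero at different exponential rates, $e^{-(k_0+w)/2}$ versus $e^{-(k_0+w)/3}$. Ensuring that the constants $c_0, c_1, c_2$ of Lemma~\ref{lem:intervals} behave well enough with this narrow choice of $\delta$, uniformly in $w \in [0,1]$, is the delicate technical point.
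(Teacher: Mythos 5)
Your overall strategy -- apply Lemma~\ref{lem:intervals} for the cascade, use Corollaries~\ref{cor:coupling} and~\ref{cor:tailY} for a final round, and observe that $\delta$ is squeezed between $e^{-x/2}$ and $e^{-x/3}$ -- is in the right spirit, and your analysis of where those exponents come from is correct. However, there are two genuine gaps that you yourself half-flag at the end.

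First, the containment argument is wrong as stated. You argue that ``the widening at level $k-1$ is of order $e^{x/2+k}$, which is smaller than the linear width of $J_k$ (of order $\delta e^{x+k}$) \ldots hence $I_{k-1}$ contains $J_k$.'' But $I_{k-1}$ being only slightly narrower than the naively scaled $[\tilde J_1^- e^{k-1}, \tilde J_1^+ e^{k-1}]$ does not imply that it contains $J_k$; what you actually need is for the inward shrinkage caused by the $c_0$-terms in \eqref{left}--\eqref{right} to be smaller than the \emph{margin} by which $\tilde J_1$ enlarges $J_1$, scaled up by $e^{k-1}$. Writing the margin as $C e^{x/2}$ on each side, this forces $\gamma\, \delta e^{x+1} \lesssim (1-\gamma)C e^{x/2}$, i.e.\ $\delta \lesssim C e^{-x/2}$. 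So with a fixed constant $C$ the cascade constraint in fact pins $\delta$ down to order $e^{-x/2}$ exactly, not anywhere in the window $(C_1 e^{-k_0/2}, C_2 e^{-(k_0+1)/3})$, and all three quantities -- shrinkage, margin, and tail gap -- are of the same order $e^{x/2}$, so the constants $C$, $D$, and $\gamma$ have to be balanced against each other. That balancing is precisely what you call the ``delicate technical point'' and leave open.

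Second, with $I_0 = \tilde J_1$ of width $\sim C e^{x/2}$, the constant $c_0$ of Lemma~\ref{lem:intervals} satisfies $c_0 \approx (\sqrt{\smash[b]{I_0^+}} - \sqrt{\smash[b]{I_0^-}})\gamma/(s_0\sqrt{e}) \sim C\gamma$, which is \emph{bounded} as $x\to\infty$. The bound $1/(e^{c_1}-1)+1/(e^{c_2}-1)$ is therefore a fixed constant, not something tending to~$0$ as $k_0\to\infty$. You would have to make $C$ large and $\gamma$ small simultaneously, and then re-verify that the final-round gap $(2D - C/e)e^{x/2}$ can still be made a large multiple of the standard deviation. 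All of this can be done, but none of it is done.

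The paper avoids both difficulties by a simpler decomposition: it applies Lemma~\ref{lem:intervals} \emph{once}, with $I_0(x) = [e^{x-2\delta_x}, e^{x+2\delta_x}]$ and $\delta_x = \tfrac{1}{12}e^{-x/3}$ as the target (no separate final round). This $I_0(x)$ has width $\sim \tfrac13 e^{2x/3}$, which on the one hand fits inside $J_0$ (whose width is $\sim e^{2x/3}$, checked via $e^{x-3\delta} + (e^{x-3\delta})^{2/3} \geq e^{x+3\delta}$), and on the other hand makes $c_0(x) \sim e^{x/6} \to \infty$, so the probability bound in~\eqref{boundintervals} tends to~$0$ automatically. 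The inclusions $J_k \subset I_k(x)$ then follow from a clean algebraic identity (exploiting $\gamma=1/4$) with no fine-tuning of competing constants. In short: the paper takes $I_0$ wide (order $e^{2x/3}$) rather than essentially as narrow as $\tilde J_1$ allows (order $e^{x/2}$), and that single choice removes both of your outstanding difficulties.
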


Note that on the event considered in Proposition~\ref{prop:ingr3}, the number 
of survivors~$X^n_{k_n}$ after $k_n$ shooting rounds for different starting 
points $n\in \bigcup_{k=1}^\infty J_k$ are all in the same interval~$J_0$. 
By~\eqref{Xni}, from this moment onward the processes~$X^n_{k_n+i}$ ($i\geq0$) 
for different~$n$ will be coupled together. Our next two propositions explore 
what will happen when we are in a situation like this.

\begin{proposition}
	\label{prop:ingr2}
	For all $n\geq2$ and $i\geq0$, let the~$X^n_i$ be coupled as 
	in~\eqref{Xni}, with $k_n=0$ for all~$n$. Then for all $\varepsilon>0$ 
	there exist $a_0$ and~$d$ such that, for all $a,b$ with $a_0\leq a<b\leq 
	a+a^{2/3}$,
	\[
		\P\bigl( \text{$-1 \leq  X^b_i - X^a_i  \leq d$ and $X_i^a \geq 
		a^{0.01}$ for some~$i$} \bigr) \geq 1-\varepsilon.
	\]
\end{proposition}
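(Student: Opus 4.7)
The plan is to track the difference process $D_i := X_i^b - X_i^a$ and the smaller trajectory $N_i := X_i^a$ jointly. Since $D_0 = b - a \leq a^{2/3}$ and $N_0 = a$, the goal is to choose $k = \lceil \alpha \log a \rceil$ with $\alpha$ slightly above $2/3$, and show that with probability $\geq 1 - \varepsilon$ both $D_k \leq d$ and $N_k \geq a^{0.01}$ hold; taking $i=k$ then verifies the claim. The choice $\alpha \in (2/3, 0.99)$ leaves room both for the difference to contract from $a^{2/3}$ down to $O(1)$ and for the smaller process to remain well above $a^{0.01}$.

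For the one-step estimate, parts~(1) and~(2) of Lemma~\ref{lem:coupling} applied to the $i$-th copy $Y^{(-i)}$ give
\[
	-1 \leq D_{i+1} \leq \bigl( Y_{N_i+D_i}^{(-i)} - Y_{N_i}^{(-i)} \bigr) + 1,
\]
and the right-hand difference is a nonnegative integer at most $D_i$. Since the $i$-th copy is independent of $(N_i, D_i)$, conditioning on $(N_i, D_i) = (m, \Delta)$ reduces the problem to controlling $Y_{m+\Delta} - Y_m$ in a fresh copy. Using the balls-in-boxes interpretation together with $\E Y_r \approx (r-1)/e$ from Corollary~\ref{cor:tailY}, the mean of this difference is $\Delta / e + O(1)$, and an appropriate application of Lemma~\ref{lem:sumofind.} and Theorem~\ref{thm:tails} yields, with probability $\geq 1 - e^{-ct^2}$,
\[
	D_{i+1} \leq \tfrac{1}{e} D_i + t \sqrt{D_i} + O(1).
\]
Iterating this contraction while applying Corollary~\ref{cor:tailY} round by round to keep $N_i$ close to $a/e^i$, a union bound over the $k = O(\log a)$ rounds produces $N_k \geq a^{1-\alpha}/2 \gg a^{0.01}$ and
\[
	D_k \leq e^{-k} D_0 + C \sum_{j=0}^{k-1} e^{-(k-1-j)} \sqrt{D_j} + O(k) = O(1),
\]
where the sum is estimated using the inductive bound $D_j = O(a^{2/3} e^{-j})$. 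Choosing $a_0$ large and then $d$ large enough in terms of $\varepsilon$ completes the argument.

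The main obstacle is establishing the sharp tail bound on $Y_{m+\Delta} - Y_m$: applying Corollary~\ref{cor:tailY} separately to $Y_{m+\Delta}$ and $Y_m$ only gives fluctuations of order $\sqrt{m+\Delta}$, which at intermediate stages dwarfs $\sqrt{\Delta}$ and destroys the contraction. The remedy I foresee is to view the difference as a sum indexed by the $\Delta$ extra balls/boxes and to show that, conditional on the outcome of the $m$-box experiment, the number of additional empty boxes among the $\Delta$ new ones is again distributionally a sum of independent Bernoullis, in the spirit of Lemma~\ref{lem:sumofind.}. Theorem~\ref{thm:tails} then yields fluctuations of the required order $\sqrt{\Delta}$, closing up the contraction and making the iteration go through uniformly in $a \geq a_0$.
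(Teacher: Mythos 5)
Your overall scheme---tracking the difference $D_i := X_i^b - X_i^a$, showing it contracts round by round while $X_i^a$ stays well above $a^{0.01}$, and closing with a union bound---is the same idea the paper uses. The critical difference, which you correctly identify as the ``main obstacle,'' is that the entire argument hinges on a \emph{one-step contraction estimate for the difference} with a tail that is exponentially small in the \emph{current} difference $D_i$ (not in $m$). In the paper this is exactly Lemma~\ref{lem:ingr2}: for $a_0\leq a<b\leq a+a^{2/3}$,
\[
	\P\bigl( S_b - S_a > \tfrac12 (b-a) \bigr) \leq c_1 e^{-c_2 (b-a)}.
\]
Your proposal does not actually prove this; you sketch a CLT-style bound $D_{i+1}\leq \tfrac1e D_i + t\sqrt{D_i}+O(1)$ with failure probability $e^{-ct^2}$, and you correctly observe that naively applying Corollary~\ref{cor:tailY} to $Y_{m+\Delta}$ and $Y_m$ separately gives $\sqrt{m}$-sized fluctuations and therefore fails.

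Where your proposal breaks down is the proposed remedy. The claim that ``conditional on the outcome of the $m$-box experiment, the number of additional empty boxes among the $\Delta$ new ones is again distributionally a sum of independent Bernoullis, in the spirit of Lemma~\ref{lem:sumofind.}'' is not established and is in fact doubtful. Lemma~\ref{lem:sumofind.} is a delicate unconditional fact obtained by showing that the generating function of $Y_n$ has only real roots; there is no reason this property survives conditioning on a partial configuration, and the coupling in the paper (which matches $Y^m_i$ to $Y^{m+\Delta}_{i+\Delta}$ through shared uniforms $U_m,\dots,U_1$, with the $Z$-process first taking $\Delta$ free steps) does not reduce to ``throw $\Delta$ fresh balls into $\Delta$ fresh boxes.'' The paper instead proves the contraction by an entirely different mechanism: it shows the difference process $Z^b_{b-a+i}-Y^a_i$ (sampled at the $Y^a$-decrease times) is stochastically dominated by an explicit pure birth process, embeds that birth process in continuous time via Lemma~\ref{lem:binomial} to identify its marginal as Binomial with success probability near $1-e^{-(1-1/e)}\approx0.47<1/2$, and then applies Hoeffding. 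That machinery is what yields a failure probability of order $e^{-c_2(b-a)}$, which is exactly what the union bound over the geometrically decreasing sequence $D_0,D_1,\dots$ needs (and what a fixed-$t$ CLT bound cannot supply over $O(\log a)$ rounds without letting $d$ grow with $a$). So while your top-level iteration and the final union bound are the right shape, the proof contains a genuine gap at precisely the step you flag, and the suggested fix is not a viable substitute for Lemma~\ref{lem:ingr2}.
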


\begin{proposition}
	\label{prop:ingr1}
	For all $n\geq2$ and $i\geq0$, let the~$X^n_i$ be coupled as 
	in~\eqref{Xni}, with $k_n=0$ for all~$n$. Then for all $\varepsilon>0$ 
	and~$d$ there exists~$a_1$ such that, for all $a,b$ with $a_1 \leq a < b 
	\leq a+d$,
	\[
		\P\bigl( \text{$X^a_i = X^b_i$ for some~$i$} \bigr)
		\geq 1-\varepsilon.
	\]
\end{proposition}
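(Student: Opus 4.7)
The plan is to analyze the integer-valued difference $D_i := X^b_i - X^a_i$, initialized at $D_0 = b - a \in \{1,\dots,d\}$, and to show that it is absorbed at~$0$ with probability at least $1-\varepsilon$ provided $a$ is large. The argument rests on two features of the coupling. First, absorption at~$0$ is sticky: if $D_i = 0$ then $X^a_i = X^b_i$, and since $k_m = 0$ for all $m$, equation~\eqref{Xni} yields $X^a_{i+1} = S^{(-i)}_{X^a_i} = S^{(-i)}_{X^b_i} = X^b_{i+1}$, so $D_{i+1} = 0$. Second, by Lemma~\ref{lem:coupling}, $|D_{i+1}| \leq |D_i| + 1$; in particular $|D_i| \leq d + i$ deterministically, so for any fixed horizon $K$ the difference process lives in a bounded window.

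The crux of the proof is a uniform positive lower bound on the one-step collision probability: for each $C \geq 1$ there exist $p_0 = p_0(C) > 0$ and $m_0 = m_0(C)$ such that, for every integer $\delta$ with $1 \leq |\delta| \leq C$ and every $m$ with $m, m+\delta \geq m_0$,
\[
    \P\bigl(S_{m+\delta} = S_m\bigr) \geq p_0.
\]
Taking $\delta > 0$ (the other sign is symmetric), the event $\{S_{m+\delta} = S_m\}$ contains the intersection $\{S_m = Y_m\} \cap \{Y_{m+\delta} = Y_m\} \cap \{S_{m+\delta} = Y_{m+\delta}\}$, and each of these three events has a strictly positive limit as $m\to\infty$: $\P(S_n = Y_n) \to 1 - 1/e$ (using $\E(S_n - Y_n) \to 1/e$), while $\P(Y_{m+\delta} = Y_m)$ has a positive limit via an analysis of the $\{0,1\}$-valued ladder increments $Y_{m+k} - Y_{m+k-1}$ in~\eqref{Yprocess}. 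The main obstacle is that these three events share randomness through the variables $U_i$ of the coupling, so independence is unavailable; one must inspect the constructions~\eqref{Yprocess}--\eqref{Zprocess} and that of $S^n$ to bound their joint probability from below.

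Granted this bound, choose $K$ with $(1 - p_0(d+K))^K < \varepsilon/2$, which is possible since $p_0(C) > 0$ for any fixed $C$. By iterating Corollary~\ref{cor:tailY} through the coupling, applied to both $X^a_i$ and $X^b_i$, the event $E := \{\min(X^a_i, X^b_i) \geq m_0(d+K) \text{ for all } i \leq K\}$ has probability at least $1 - \varepsilon/2$ once $a \geq a_1$, with $a_1$ chosen in terms of $K$, $m_0(d+K)$, and $\varepsilon$. On $E$, since $|D_i| \leq d + K$ throughout, applying the one-step bound in each round where $D_i \neq 0$ gives $\P(D_i \neq 0 \text{ for all } i \leq K \mid E) \leq (1 - p_0(d+K))^K < \varepsilon/2$, whence $\P(D_i = 0 \text{ for some } i) \geq 1 - \varepsilon$.
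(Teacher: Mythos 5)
Your high-level plan is right: bound the difference $D_i=X^b_i-X^a_i$, show a positive one-step collision probability, and iterate. But the proposal has a fatal quantitative flaw plus an unproven key step, and the fix requires a genuinely new ingredient that the paper supplies via Lemma~\ref{lem:ingr2}.

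The fatal flaw is the sentence ``choose $K$ with $(1-p_0(d+K))^K<\varepsilon/2$, which is possible since $p_0(C)>0$ for any fixed~$C$.'' Positivity of $p_0(C)$ for each fixed $C$ does \emph{not} imply that such a $K$ exists, because you let the difference grow linearly ($|D_i|\le d+i$), so your window is $C=d+K$ and grows with the horizon. The true one-step collision probability decays exponentially in the gap --- the paper's Lemma~\ref{lem:ingr1} gives $\P(S_a=S_b)\ge e^{-7(b-a)}$, and this exponential rate is essentially sharp, since e.g.\ $\{Y_{m+\delta}=Y_m\}$ already costs roughly $\theta^{\delta}$ for some $\theta<1$. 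With $p_0(d+K)\asymp e^{-c(d+K)}$, you have $(1-p_0(d+K))^K\ge 1-Kp_0(d+K)\to 1$ as $K\to\infty$, and it stays near $1$ for all $K$ once $d$ is moderately large. So no $K$ works and the argument does not close. The missing ingredient is to \emph{prevent} the difference from drifting upward: this is exactly what Lemma~\ref{lem:ingr2} in the paper provides (with high probability $S_b-S_a\le\tfrac12(b-a)$). With the contraction, the difference stays $\le 2d_0$ for a fixed $d_0$ throughout, the per-round collision probability is bounded below by the fixed constant $e^{-14d_0}$, and then taking $T\gg e^{14d_0}$ rounds makes $(1-e^{-14d_0})^T$ small. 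Your proposal has no analogue of this contraction step.

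Separately, you leave the one-step collision bound itself ($\P(S_{m+\delta}=S_m)\ge p_0$) unproven, acknowledging it as ``the main obstacle.'' Your proposed route --- lower-bounding the joint probability of $\{S_m=Y_m\}\cap\{Y_{m+\delta}=Y_m\}\cap\{S_{m+\delta}=Y_{m+\delta}\}$ --- runs into the dependence between these events that you flag but do not resolve. The paper's Lemma~\ref{lem:ingr1} instead exploits the full sandwich of the coupling: both $S_a$ and $S_b$ lie between $Y_a^a$ and $Z_b^b$, so it suffices to show $\P(Y_a^a=Z_b^b)\ge e^{-7(b-a)}$, and this single quantity is estimated by letting $Z^b$ run its first $b-a$ steps independently down to height~$a$ and then keeping it glued to $Y^a$ for the remaining steps. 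That is a cleaner route than your three-event intersection and actually yields the required lower bound.
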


We will now prove Theorem~\ref{thm:asymptotics} using these three 
propositions, and defer the proofs of Propositions \ref{prop:ingr3}, 
\ref{prop:ingr2} and~\ref{prop:ingr1} to Sections \ref{sec:oneshootinground} 
and~\ref{sec:proofpropositions}.

\begin{proof}[Proof of Theorem~\ref{thm:asymptotics}.]
	We define, for all $x\geq 0$ and integer~$k$,
	\[
		f_k(x) := p_{\floor{\exp(k+x)}}.
	\]
	First we will use Propositions \ref{prop:ingr3}, \ref{prop:ingr2} 
	and~\ref{prop:ingr1} to prove that for all $\varepsilon>0$, there exist 
	$\delta>0$ and~$k_0$ such that, for all $u,v\in [0,1]$ with $\abs{u-v}\leq 
	\delta$,
	\begin{equation}
		\label{asymptoticsstar}
		\abs{f_k(u) - f_l(v)} \leq \varepsilon
		\quad\text{for all $k,l\geq k_0$}.
	\end{equation}
	
	Let $\varepsilon>0$. Choose $a_0$ and~$d$ according to 
	Proposition~\ref{prop:ingr2} such that, for all~$a,b$ with $a_0\leq 
	a<b\leq a+a^{2/3}$,
	\begin{equation}
		\label{mainingr2}
		\P\bigl( \text{$\abs{X_i^b - X_i^a} \leq d$ and $X_i^a, X_i^b \geq 
		a^{0.01} - 1$ for some~$i$} \bigr) \geq 1-\frac{\varepsilon}{3}.
	\end{equation}
	Next, choose~$a_1$ according to Proposition~\ref{prop:ingr1} such that, 
	for all~$a,b$ with $a_1\leq a < b\leq a+d$,
	\begin{equation}
		\label{mainingr1}
		\P\bigl( \text{$X_i^a = X_i^b$ for some~$i$} \bigr)
		\geq 1 - \frac{\varepsilon}{3}.
	\end{equation}
	Recall that in both \eqref{mainingr2} and~\eqref{mainingr1}, the shooting 
	processes~$X^n_i$ are coupled from the first shooting round onward.

	Finally, we define $a_2 := \max\{ a_0, (a_1+1)^{100} \}$, and choose 
	$\delta \in (0,\tfrac13)$ and $k_0 > 1 + \log a_2$ according to 
	Proposition~\ref{prop:ingr3} such that
	\begin{equation}
		\label{mainingr3}
			\inf_{w\in[0,1]} \P\bigl( \text{for all $\textstyle n \in 
			\bigcup_{k=k_0}^\infty J_k$, $X^n_{k_n} \in J_0$} \bigr)
			\geq 1-\frac{\varepsilon}{3},
	\end{equation}
	where the $X^n_i$ are coupled as in~\eqref{Xni}, with $k_n$ equal to the 
	index of the interval~$J_k$ containing~$n$. We claim 
	that~\eqref{asymptoticsstar} holds for these $\delta$ and~$k_0$.

	In order to prove this, let $u,v \in[0,1]$ be such that $\abs{u-v}\leq 
	\delta$ and let $k,l\geq k_0$. Write $w = (u+v)/2$ and set
	\[
		\alpha := \floor{\exp(k+u)},\qquad \beta := \floor{\exp(l+v)}.
	\]
	Note from \eqref{J0} and~\eqref{Jk} that $\alpha\in J_{k-k_0}$ and 
	$\beta\in J_{l-k_0}$, so in particular, $X^\alpha_i$ and~$X^\beta_i$ are 
	defined and coupled as described above. We need to show that $\abs{ 
	p_\alpha - p_\beta } \leq \varepsilon$, but we will actually prove the 
	stronger statement that
	\begin{equation}
		\label{xsequal}
		\P \bigl( \text{$X^\alpha_{k_\alpha+i} = X^\beta_{k_\beta+i}$ for 
		some~$i$} \bigr) \geq 1-\varepsilon.
	\end{equation}
	
	To prove~\eqref{xsequal}, first note that by \eqref{mainingr3} 
	and~\eqref{J0},
	\begin{equation}
		\label{mainingr3c}
			\P\bigl( X^a_{k_a}, X^b_{k_b} \in \bigl[ e^{k_0+w-3\delta}, 
			e^{k_0+w-3\delta} + (e^{k_0+w-3\delta})^{2/3} \bigr] \bigr)
			\geq 1-\frac{\varepsilon}{3}.
	\end{equation}
	Since $k_0 > 1+\log a_2$ and $\delta < 1/3$, we have that 
	$e^{k_0+w-3\delta} \geq a_2\geq a_0$. Using the fact that 
	$X^\alpha_{k_\alpha+i}$ and~$X^\beta_{k_\beta+i}$ are coupled together for 
	all~$i\geq0$, and since $a_2^{0.01} \geq a_1+1$, it now follows from 
	\eqref{mainingr3c} and~\eqref{mainingr2} that
	\begin{multline}
		\label{mainingr2c}
		\P\bigl( \text{$\abs[\big]{X^\alpha_{k_\alpha+i} - 
		X^\beta_{k_\beta+i}} \leq d$ and $X^\alpha_{k_\alpha+i}, 
		X^\beta_{k_\beta+i} \geq a_1$ for some~$i$} \bigr) \\
		\geq 1 - \frac{2\varepsilon}{3}.
	\end{multline}
	By \eqref{mainingr2c} and~\eqref{mainingr1}, we have that~\eqref{xsequal} 
	holds. This proves~\eqref{asymptoticsstar}.

	Next we prove that \eqref{asymptoticsstar} implies the theorem. Let 
	$\varepsilon>0$, and let $\delta>0$ and~$k_0$ be such that 
	\eqref{asymptoticsstar} holds for this~$\varepsilon$. Fix $x\geq 0$. 
	Taking $u = v = x-\floor{x}$ in~\eqref{asymptoticsstar} and using $f_k(x) 
	= f_k(\floor{x} + u) = f_{k+\floor{x}}(u)$, we get
	\begin{equation}
		\label{asymptoticsone}
		\abs{f_k(x) - f_l(x)}
		= \abs[\big]{f_{k+\floor{x}}(u) - f_{l+\floor{x}}(u)}
		\leq \varepsilon,\quad \text{for all $k,l\geq k_0$}.
	\end{equation}
	In particular, $\sup_{k\geq k_0} f_k(x) \leq \varepsilon + \inf_{k\geq 
	k_0} f_k(x)$ and hence $\lim_{k\to\infty} f_k(x)$ exists. We define
	\[
		f(x) := \lim_{k\to\infty} f_k(x), \qquad x\geq0.
	\]
	Since $f_k(l+x) = f_{k+l}(x)$ for integer~$k,l$, the limit function~$f$ is 
	periodic with period~1. Furthermore, since \eqref{asymptoticsone} holds 
	uniformly for all $x\geq 0$, by taking $k=k_0$ and letting $l\to\infty$ we 
	obtain
	\[
		\varepsilon
		\geq \sup_{x\geq 0} \abs{f_{k_0}(x) - f(x)}
		= \sup_{x\geq 0} \abs[\big]{p_{\floor{\exp(k_0+x)}} - f(k_0+x)},
	\]
	which proves the desired uniform convergence to the limit function~$f$. 
	Finally, by~\eqref{asymptoticsstar} we obtain that, for all $u,v\in [0,1]$ 
	with $\abs{u-v}\leq \delta$,
	\[
		\abs{f(u) - f(v)}
		= \lim_{k\to\infty} \abs{f_k(u) - f_k(v)}
		\leq \varepsilon,
	\]
	which shows that $f$ is continuous. This completes the proof.
\end{proof}

\subsection{One shooting round}
\label{sec:oneshootinground}

In this section we give two key ingredients for the proof of Propositions 
\ref{prop:ingr3}, \ref{prop:ingr2} and~\ref{prop:ingr1}. These two ingredients 
give information about one shooting round of coupled shooting processes 
starting at two different points $a$ and~$b$. The first ingredient is the 
following lemma:

\begin{lemma}
	\label{lem:ingr1}
	Let $S_n$, $n\geq 2$, be coupled as in Section~\ref{sec:coupling}.	For 
	all $a,b$ such that $a< b \leq \tfrac{5}{4} a$,
	\[
		\P(S_a=S_b) \geq e^{-7(b-a)}.
	\]
\end{lemma}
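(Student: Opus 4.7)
My strategy is to lower-bound $\P(S_a=S_b)$ by exhibiting an event, definable from the joint coupling of Section~\ref{sec:coupling}, on which $S_a=S_b$ holds. By Lemma~\ref{lem:coupling}(2), each $S_n\in\{Y_n,Y_n+1\}$, so
\[
\{S_a=S_b\}\supseteq\{Y_a=Y_b\}\cap\{S_a-Y_a=S_b-Y_b\},
\]
and my goal becomes to bound this intersection from below.

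\textbf{Alignment of the $Y$'s.} By Lemma~\ref{lem:coupling}(1), $Y_a\leq Y_{a+1}\leq\cdots\leq Y_b$ with unit integer increments, so $\{Y_a=Y_b\}=\bigcap_{n=a}^{b-1}\{Y_n=Y_{n+1}\}$. For a single transition $n\to n+1$, the joint process $(Y^n_i,Y^{n+1}_{i+1})_{i=0}^n$ starts at $(n,n)$ and has difference living in $\{0,1\}$; from the recursion \eqref{Yprocess} and the shared variable $U_{n-i}$, the per-step probability of leaving the diagonal is $Y^n_i/(n(n-1))\leq 1/n$. Since $Y^n_0=n$ forces both processes to decrement at step~$1$, the probability of staying on the diagonal throughout is $\geq (1-1/n)^{n-1}\geq 1/e$, \emph{pathwise} in $Y^n$. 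Each $Y^n$ is $\sigma(U_1,\ldots,U_{n-2})$-measurable, and a sequential application of the chain rule together with the tower property converts this pathwise bound into
\[
\P(Y_a=Y_b)\geq (1/e)^{b-a}.
\]

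\textbf{Matching the $S$--$Y$ gaps.} Writing $D_n:=S_n-Y_n\in\{0,1\}$, I next need to bound $\P(D_a=D_b\mid Y_a=Y_b)$ from below. By an analogous analysis of the $S$--$Y$ coupling drawn from the proof of Lemma~\ref{lem:coupling}(2), the event $\{D_n=0\}$ fails only when some $U_{n-i}$ lands in an interval of length $1/(n-1)$ while $i+1\in A^n_i$, so $\P(D_n=0)\geq (1-1/(n-1))^n\geq c_1$ for an explicit $c_1>0$ uniform in the relevant range $n\ge 4$. Given the $U$'s, $D_a$ and $D_b$ depend on the $V_{A,i}$'s indexed by the Markov chains $A^a$ and $A^b$, which share indices only when the two chains happen to visit a common set; a crude treatment of this overlap yields $\P(D_a=D_b\mid Y_a=Y_b)\geq c_2$ for an absolute $c_2>0$. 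Combining,
\[
\P(S_a=S_b)\geq c_2\,(1/e)^{b-a}\geq e^{-7(b-a)}
\]
for all $b-a\geq 1$ in the regime $b\le\tfrac54 a$.

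\textbf{Main obstacle and role of the hypothesis.} The principal technical point is controlling the conditional dependence between $D_a$ and $D_b$ through the possibly shared $V$'s. Fortunately, the target exponent $7$ is much larger than the $1$ produced by the $Y$-alignment alone, leaving substantial slack: one only needs $c_2\geq e^{-7(b-a)}\cdot e^{b-a}\leq e^{-6}$, which is comfortably satisfied by any reasonable crude bound. The hypothesis $b\le\tfrac54 a$ enters only to keep $n/(n-1)$ and $(1-1/(n-1))^n$ uniformly bounded in $n\in[a,b]$; in particular it forces $a\geq 4$, which is what makes the constant $c_1\geq(2/3)^4$ usable in the range of interest.
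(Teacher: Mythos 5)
Your decomposition $\{S_a=S_b\}\supseteq\{Y_a=Y_b\}\cap\{S_a-Y_a=S_b-Y_b\}$ is valid, but it leads into two sub-problems that are each genuinely harder than you acknowledge, and the paper sidesteps both by a different (cleaner) reduction. The paper observes that the monotone coupling gives $Y^a_a\leq S_a\leq Z^b_b$ \emph{and} $Y^a_a\leq S_b\leq Z^b_b$ (combining Lemma~\ref{lem:coupling}(1) and~(2)), so it suffices to show $\P(Y^a_a=Z^b_b)\geq e^{-7(b-a)}$. This is a statement purely about the two birth-and-death chains $Y^a$ and $Z^b$; it involves only the shared uniforms $U_k$ and no $V_{A,i}$'s at all. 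The proof then proceeds in two explicit steps: $\P(Z^b_{b-a}=a)\geq e^{-(b-a)}$ (Stirling), and $\P(Y^a_a=Z^b_b\mid Z^b_{b-a}=a)\geq e^{-6(b-a)}$ (a per-step bound using $b\leq\tfrac54a$ and $1-u\geq e^{-2u}$ for $u\leq\tfrac34$).

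The first gap is in the claim $\P(Y_a=Y_b)\geq(1/e)^{b-a}$ ``by sequential application of the chain rule.'' If you chain over $n$ with the natural filtration $\mathcal{F}_n=\sigma(U_1,\dots,U_{n-2})$, you need $\P(Y_n=Y_{n+1}\mid\mathcal{F}_n)\geq 1/e$ a.s.\ --- but this is false. Given $\mathcal{F}_n$, the trajectory of $Y^n$ is fully determined, and the only remaining randomness in $Y_{n+1}$ is whether $U_{n-1}$ causes divergence at the first nondeterministic step; the fate of the pair at all later steps $i\geq2$ is then $\mathcal{F}_n$-measurable. Consequently $\P(Y_n=Y_{n+1}\mid\mathcal{F}_n)$ takes values in $\{0,1/n,(n-1)/n,1\}$, and the small values do occur. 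A correct argument for $\P(Y_a=Y_b)\geq e^{-C(b-a)}$ must aggregate \emph{simultaneously over all pairs at each step} (exploiting that the divergence intervals $(y/n,y/(n-1)]$ for different $n$ are disjoint, so at each $U_k$ at most one pair can split); you cannot just multiply single-pair bounds.

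The second gap is more serious: ``a crude treatment of this overlap yields $\P(D_a=D_b\mid Y_a=Y_b)\geq c_2$'' is not a proof, and I do not see how to make it one along these lines. Whereas $Y_a,Y_b$ are $U$-measurable, $D_a=S_a-Y_a$ and $D_b=S_b-Y_b$ depend on the indicators $\I(i+1\in A^a_i)$ and $\I(i+1\in A^b_i)$, which in turn depend on the $V_{A,i}$'s along two different (and only partially coupled) set-valued Markov chains. You give a lower bound on the marginal $\P(D_n=0)$ but not on the joint matching probability, and there is no a priori reason the conditional correlation structure is benign. The paper's sandwich $Y^a_a\leq S_a,S_b\leq Z^b_b$ eliminates exactly this difficulty: on $\{Y^a_a=Z^b_b\}$, both $S_a$ and $S_b$ are squeezed to the same value regardless of what the $V$'s do. This is the key structural idea your proposal is missing.

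Finally, the hypothesis $b\leq\tfrac54a$ is not merely bookkeeping (forcing $a\geq4$ and keeping $(1-1/(n-1))^n$ bounded, as you suggest): in the paper's proof it is what makes the matching step work at rate $e^{-6(b-a)/a}$ per step, via $\tfrac{3(b-a)}{a}\leq\tfrac34$ so that $1-u\geq e^{-2u}$ applies. An analogous quantitative control on $(b-a)/a$ would be needed in any fixed version of your argument, and your write-up does not exhibit it.
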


\begin{proof}
	Let $Y_i^n$, $S_i^n$ and~$Z_i^n$ be coupled as in 
	Section~\ref{sec:coupling}. By Lemma~\ref{lem:coupling}, we have that 
	$Y_a^a \leq S_a \leq Z^b_b$ and $Y_a^a \leq S_b \leq Z^b_b$. Hence it 
	suffices to show that
	\begin{equation}
		\label{ingr1.1}
		\P(Y^a_a = Z^b_b) \geq e^{-7(b-a)}.
	\end{equation}
	Note that the Markov chain~$(Z^b_i)_i$ first takes $b-a$ steps 
	independently, before its steps are coupled to the Markov 
	chain~$(Y^a_i)_i$. To prove~\eqref{ingr1.1}, we will first estimate the 
	probability that the~$Z^b$ process decreases to the height~$a$ in these 
	first $b-a$ steps, and then estimate the probability that in the remaining 
	$a$~steps, the distance between $Z^b_{b-a+i}$ and~$Y^a_i$ never increases.

	For the first part, note that by Robbins' version of Stirling's 
	formula~\cite{Robbins},
	\[\begin{split}
		\P(Z^b_{b-a} = a)
		&= \frac{b-1}{b-1} \frac{b-2}{b-1} \dotsm \frac{a}{b-1}
		 = \frac{(b-1)!}{(a-1)!} (b-1)^{-(b-a)} \\
		 &\geq \frac{\sqrt{2\pi} (b-1)^{b-1+\frac{1}{2}} e^{-(b-1)} 
		 e^{1/(12b-11)} } {\sqrt{2\pi} (a-1)^{a-1+\frac{1}{2}} e^{-(a-1)} 
		 e^{1/(12a-12)}} (b-1)^{-(b-a)},
	\end{split}\]
	which implies
	\begin{equation}
		\label{ingr1.2}
		\P(Z^b_{b-a} = a) \geq e^{-(b-a)}.
	\end{equation}

	Next we consider the probability that in the remaining steps, the 
	processes $Y^a_i$ and~$Z^b_{b-a+i}$ stay at the same height. By the 
	coupled transition probabilities \eqref{Yprocess} and~\eqref{Zprocess}, 
	for all $i = 0,1,\dots,a-1$ and $k < a$ we have
	\begin{multline*}
		\P(Y_{i+1}^a = Z_{b-a+i+1}^b \mid Y^a_i = Z^b_{b-a+i} = k)
		= 1- \frac{k}{a-1} + \frac{k-1}{b-1} \\
		\geq 1 - \frac{a}{a-1} +\frac{a-1}{b-1}
		= 1-\frac{1}{a-1} - \frac{b-a}{b-1}
		\geq 1 - \frac{2(b-a)}{a} - \frac{b-a}{a}.
	\end{multline*}
	By our assumption that $b \leq \tfrac54 a$ and the inequality $1-u \geq 
	e^{-2u}$, which holds for $0\leq u\leq \tfrac34$, this gives
	\[
		\P(Y_{i+1}^a = Z_{b-a+i+1}^b \mid Y^a_i = Z^b_{b-a+i} = k)
		\geq e^{-6(b-a)/a}.
	\]
	A separate computation shows that this bound also holds for $k=a$. Since 
	this bound holds for each of the remaining $a$~steps, we conclude that
	\begin{equation}
		\label{ingr1.3}
		\P(Y_a^a = Z_b^b \mid Z^b_{b-a} = a) \geq e^{-6(b-a)}.
	\end{equation}
	Together with~\eqref{ingr1.2}, this gives~\eqref{ingr1.1}, which completes 
	the proof.
\end{proof}

The second key ingredient is Lemma~\ref{lem:ingr2} below. For the proof, we 
need the following preliminary result:

\begin{lemma}
	\label{lem:binomial}
	Let $\lambda_1,\lambda_2 > 0$ be such that $\lambda_1/\lambda_2$ is an 
	integer. Let $T_x$, $x=0,1,\dots,\lambda_1/\lambda_2$, be independent 
	random variables such that $T_x$ has the exponential distribution with 
	parameter $\lambda_1 - x \lambda_2$ (where $T_{\lambda_1/\lambda_2} = 
	\infty$ with probability~1). Define
	\[
		X_t = \min\{ x\colon T_0 + T_1 + \dots + T_x > t \}.
	\]
	Then $X_t$ has the binomial distribution with parameters $n= 
	\lambda_1/\lambda_2$ and $p = 1-e^{-\lambda_2 t}$.
\end{lemma}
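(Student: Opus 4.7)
The plan is to recognize this as a standard fact about order statistics of independent exponentials in disguise, and turn that recognition into a clean proof via a coupling argument.

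First, observe that the hypothesis $\lambda_1/\lambda_2 = n$ lets us rewrite the rate of $T_x$ as $(n-x)\lambda_2$ for $x=0,1,\dots,n-1$. This is exactly the rate one sees in the classical description of the gaps between order statistics of $n$ i.i.d.\ exponentials: if $E_1,\dots,E_n$ are independent with rate $\lambda_2$ and we denote the order statistics by $E_{(1)} < E_{(2)} < \dots < E_{(n)}$, then by the memoryless property the gaps $E_{(x+1)} - E_{(x)}$ (with $E_{(0)}:=0$) are independent, and the gap after $x$ clocks have rung is exponential with rate $(n-x)\lambda_2$, since $n-x$ clocks remain active. Setting $T_x := E_{(x+1)} - E_{(x)}$ for $x<n$ and $T_n := \infty$ therefore produces a family of independent variables with exactly the distributions prescribed in the lemma.

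Given this construction, the partial sum $T_0 + T_1 + \dots + T_x$ equals $E_{(x+1)}$, the time at which the $(x+1)$-st clock rings. Hence
\[
    X_t = \min\{ x : T_0 + \dots + T_x > t \}
        = \min\{ x : E_{(x+1)} > t \}
        = \#\{ i : E_i \leq t \}.
\]
The right-hand side is a sum of $n$ independent Bernoulli indicators $\I(E_i \leq t)$, each with success probability $\P(E_i \leq t) = 1-e^{-\lambda_2 t}$. Therefore $X_t$ has the binomial distribution with parameters $n$ and $p = 1-e^{-\lambda_2 t}$, as claimed. Finally, since the distribution of $X_t$ depends only on the joint law of $(T_0,\dots,T_n)$, and this joint law has been matched to the hypothesis of the lemma, the conclusion holds for the original variables as well.

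There is no real obstacle here; the only subtlety worth flagging is the justification that the gaps between consecutive order statistics of exponentials are independent with the stated rates, which is a direct consequence of the memoryless property and could be cited or sketched in one line. Everything else is bookkeeping.
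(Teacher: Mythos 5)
Your proof is correct and is essentially the same as the paper's: both realize the $T_x$'s as the spacings between the ordered arrival times of $n$ independent rate-$\lambda_2$ exponential clocks and then identify $X_t$ with the count of clocks that have rung by time $t$. The paper merely phrases this via $n$ independent Poisson processes rather than order statistics of i.i.d.\ exponentials, which is the same construction in different clothing.
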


\begin{proof}
	Let $n = \lambda_1/\lambda_2$. Consider $n$ independent Poisson processes, 
	each with rate~$\lambda_2$. Let $X_t'$ be the number of Poisson processes 
	that have at least~1 jump before time~$t$. Clearly, $X_t'$ has the 
	binomial distribution with parameters $n = \lambda_1/\lambda_2$ and $p = 
	1-e^{-\lambda_2 t}$. We will prove that $X_t'$ has the same law as~$X_t$, 
	which implies the statement of the lemma.

	Let $T_0'$ be the waiting time until one of the $n$ Poisson processes has 
	a jump. Then $T_0'$ has the exponential distribution with parameter $n 
	\lambda_2 = \lambda_1$, hence $T_0'$ has the same law as~$T_0$. Without 
	loss of generality, suppose this jump occurs in Poisson process~1. Let 
	$T_1'$ be the waiting time from time~$T_0'$ until one of the Poisson 
	processes 2 through~$n$ has a jump. Then $T_1'$ has the exponential 
	distribution with parameter $(n-1) \lambda_2 = \lambda_1 - \lambda_2$, 
	hence $T_1'$ has the same law as~$T_1$. Moreover, $T_0'$ and~$T_1'$ are 
	independent. Continuing in this way, we construct independent random 
	variables $T_0', T_1', \dots, T_{n-1}'$ that have the same laws as $T_0, 
	T_1, \dots, T_{n-1}$. Finally, we define $T_n' = \infty$. We then have 
	that $X_t' = \min\{ x\colon T_0' + T_1' + \dots + T_x' > t \}$. It follows 
	that $X_t'$ has the same law as~$X_t$.
\end{proof}

\begin{lemma}
	\label{lem:ingr2}
	Let $S_n$, $n\geq 2$, be coupled as in Section~\ref{sec:coupling}. There 
	exist $a_0,c_1,c_2>0$ such that, for all~$a,b$ with $a_0 \leq a < b \leq a 
	+ a^{2/3}$,
	\[
		\P\bigl( S_b - S_a \leq \tfrac{1}{2} (b-a) \bigr)
		\geq 1 - c_1 e^{-c_2(b-a)}.
	\]
\end{lemma}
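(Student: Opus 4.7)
The plan is as follows. By Lemma~\ref{lem:coupling} we have $S_b - S_a \leq Z_b - Y_a \leq (Y_b - Y_a) + 1$, so it is enough to establish an estimate of the form
\[
\P\bigl( Y_b - Y_a > \tfrac{1}{2}(b-a) - 1 \bigr) \leq C e^{-c(b-a)},
\]
for some constants $C, c > 0$ and all sufficiently large $b - a$; for bounded $b - a$ the lemma's bound is vacuous once $c_1$ is chosen large enough. In the coupling of Section~\ref{sec:coupling}, $(Y^b_i)$ is driven by $U_b, U_{b-1}, \dots$ while $(Y^a_i)$ is driven by $U_a, U_{a-1}, \dots$, so $Y^b$ first performs $b-a$ \emph{extra} steps using the fresh variables $U_b, \dots, U_{a+1}$, and thereafter the two chains run in \emph{lockstep} driven by the common $U_a, U_{a-1}, \dots, U_1$.

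For the extra phase, $Y^b_{b-a}$ is the number of empty boxes after $b-a-1$ random throws into $b-1$ boxes, with expectation $\E Y^b_{b-a} = b(1-1/(b-1))^{b-a} = a + O((b-a)^2/b) = a + o(b-a)$. I would use Lemma~\ref{lem:binomial} (with $\lambda_1 = b-1$, $\lambda_2 = 1$) to embed this quantity in continuous time and combine a standard binomial Chernoff bound with a Poisson tail bound on the random number of throws to deduce $Y^b_{b-a} \leq a + r$ with probability at least $1 - C_1 e^{-c_1 (b-a)}$, for some $r = O(\sqrt{b-a})$. For the lockstep phase, set $D_i := Y^b_{b-a+i} - Y^a_i$. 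A direct comparison of the thresholds $(Y^a_i + D_i)/(b-1)$ and $Y^a_i/(a-1)$ in \eqref{Yprocess} shows that $D_i$ drifts toward the equilibrium value $Y^a_i (b-a)/(a-1)$: when $D_i$ exceeds this value, $Y^b$ is more likely to decrease than $Y^a$, and vice versa. At the terminal time $i = a$ the equilibrium is $Y_a (b-a)/(a-1) \approx (b-a)/e$, comfortably below $(b-a)/2$. I would make this quantitative by constructing an exponential supermartingale $M_i = \exp(\lambda[D_i - \alpha Y^a_i])$ for suitably small $\lambda > 0$ and $\alpha$ slightly exceeding $1/e$; applying Doob's maximal inequality, and combining with the upper bound $Y_a \leq a/e + O(\sqrt{a})$ obtained from Corollary~\ref{cor:tailY} and the initial bound $D_0 \leq r$ from the extra phase, then yields the required exponential tail for $D_a$.

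The main obstacle is the lockstep analysis: the drift coefficient of $D_i$ fluctuates with $Y^a_i$ and $D_i$ itself is not a martingale, so the constants $\lambda$ and $\alpha$ must be chosen so that the one-step conditional ratio $\E[M_{i+1} \mid Y^a_i, D_i]/M_i$ is bounded by $1$ uniformly in the state. A cleaner alternative, bypassing the drift argument altogether, would be to exploit Lemma~\ref{lem:binomial} to embed both $Y^a_a$ and $Y^b_b$ into a single continuous-time Poisson construction and reduce everything to a tail estimate for a sum of independent Bernoullis, trading the martingale analysis for a careful coupling of the two terminal stopping times.
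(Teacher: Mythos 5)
Your decomposition into an ``extra phase'' (the $b-a$ uncoupled steps) and a ``lockstep phase'' (the remaining $a$ steps driven by common uniforms) matches the paper's structure, and working with $Z^b$ versus $Y^b$ for the extra phase is an immaterial difference. There is, however, a genuine gap in the lockstep analysis, and a smaller quantitative issue in the extra phase.

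For the lockstep phase: the quantity $M_i = \exp(\lambda[D_i - \alpha Y^a_i])$ with a \emph{constant} $\alpha$ near $1/e$ is not a supermartingale for any $\lambda > 0$. The problem is that $Y^a_i$ is nonincreasing, so the term $-\alpha Y^a_i$ produces a multiplicative factor $e^{\lambda\alpha}>1$ at \emph{every} step at which $Y^a$ decreases (roughly a $1/e$ fraction of all steps), regardless of what $D_i$ does. Concretely, write $p_1 = Y^a_i/(a-1)$ for the probability that $Y^a$ decreases; when $D_i \geq Y^a_i(b-a)/(a-1)$ (the regime above equilibrium) the process $D_i$ can decrease only on the event where $Y^a$ stays put, which occurs with probability $p_2 = Y^b_{b-a+i}/(b-1) - Y^a_i/(a-1) \lesssim (b-a)/b$. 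The one-step condition $\E[M_{i+1}/M_i] \leq 1$ then reduces, to leading order in $\lambda$, to $p_1\alpha \leq p_2$, forcing $\alpha \lesssim (b-a)/b$, i.e.\ $\alpha \to 0$ as $a\to\infty$ rather than $\alpha\approx 1/e$. Worse, at the equilibrium boundary $p_2$ vanishes, and below equilibrium $D_i$ can only \emph{increase}, so the ratio exceeds $1$ there too: no fixed $\alpha>0$ makes $M_i$ a supermartingale. The heuristic ``$D_a \approx Y_a(b-a)/(a-1) \approx (b-a)/e$'' is a statement about the moving equilibrium, but $D_i$ lags this target and the actual bound achievable by a one-sided domination is closer to $0.47(b-a)$; the margin to $\tfrac12(b-a)$ is thin, so a loose supermartingale will not survive. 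The paper sidesteps all of this by \emph{reindexing} by the number of times $Y^a$ decreases: conditionally on a $Y^a$-decrease, the difference $Z^b_{b-a+i}-Y^a_i$ is monotone nondecreasing and jumps up with probability at most $[(b-a-(l-2))/b]^+$, so it is dominated by a pure birth process on $b-a$ slots; Lemma~\ref{lem:binomial} then converts this to a binomial with success probability $\approx 1-e^{-(1-1/e)}\approx 0.469$, and Hoeffding closes the argument. Your ``cleaner alternative'' of Poissonizing both $Y^a_a$ and $Y^b_b$ via Lemma~\ref{lem:binomial} is essentially this same idea, and is the route that works.

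For the extra phase: the claim that $Y^b_{b-a}\leq a + r$ with $r=O(\sqrt{b-a})$ holds with probability $1-Ce^{-c(b-a)}$ is too optimistic. The excess $Y^b_{b-a}-a$ is stochastically dominated by a $\mathrm{Binomial}(b-a,\,p)$ with $p\lesssim (b-a)/b$, so its mean is $\lesssim (b-a)^2/b$ and a Chernoff bound at level $C\sqrt{b-a}$ yields only $\exp(-\Theta(\sqrt{b-a}))$, not $\exp(-\Theta(b-a))$. You need $r = c(b-a)$ for a small constant $c$ (the paper uses $c=0.01$) to get the exponential-in-$(b-a)$ tail; this still leaves enough slack for the lockstep phase.
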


\begin{proof}
	Let $Y_i^n$, $S_i^n$ and~$Z_i^n$ be coupled as in 
	Section~\ref{sec:coupling}. First we will show that there exists $c_3>0$ 
	such that, for all $a,b$ sufficiently large and satisfying $a < b \leq a + 
	a^{2/3}$,
	\begin{equation}
		\label{ingr2.1}
		\P\bigl( Z^b_{b-a} - a > 0.01 (b-a) \bigr)
		\leq e^{-c_3(b-a)}.
	\end{equation}
	Note that $Z^b_{b-a} - a$ is the number of times the $Z^b$~process does 
	not decrease in the first $b-a$ steps. By~\eqref{Mprocess3}, for $0\leq i 
	< b-a$, the conditional probability that $Z^b$ does not decrease in the 
	$(i+1)$-th step satisfies
	\[
		\P\bigl( Z^b_{i+1} = Z^b_i \bigm| Z^b_i \bigr)
		\leq 1 - \frac{a+1-1}{b-1} \leq 1 - \frac{a}{b}.
	\]
	Therefore, $Z^b_{b-a} - a$ is stochastically smaller than a random 
	variable~$W$ having the binomial distribution with parameters $n= b-a$ and 
	$p = 1-\tfrac{a}{b}$. If $a$ and~$b$ are such that $1-\tfrac{a}{b} < 
	0.01$, then we can use Hoeffding's inequality to bound the left hand side 
	of~\eqref{ingr2.1} by
	\[
		\P(W > 0.01 (b-a)) \leq \exp \bigl[ -2(b-a) (0.01-p)^2 \bigr].
	\]
	It follows that~\eqref{ingr2.1} holds.

	Next we will show that there exist $c_4,c_5>0$ such that, for all $a,b$ 
	sufficiently large and satisfying $a <  b \leq a + a^{2/3}$,
	\begin{equation}
		\label{ingr2.2}
		\P\bigl( Z^b_b - Y^a_a > \tfrac{1}{2} (b-a) \bigm| Z^b_{b-a} - a \leq 
		0.01 (b-a) \bigr) \leq c_4 e^{-c_5(b-a)}.
	\end{equation}
	To prove~\eqref{ingr2.2}, we consider the process of differences 
	$Z^b_{b-a+i} - Y^a_i$ at the steps~$i$ at which the $Y^a$~process 
	decreases, and bound the probability that at such steps the $Z^b$~process 
	does not decrease. Using the coupled transition probabilities 
	\eqref{Yprocess} and~\eqref{Zprocess}, for $k < a$ and $l > 0$ we have
	\begin{align}
		& \P\bigl( Z^b_{b-a+i+1} = k+l \bigm| Y^a_{i+1} = k-1, Y^a_i = k, 
		Z^b_{b-a+i} = k+l \bigr) \nonumber \\
		&\qquad\null = \biggl[ \left( \frac{k}{a-1} - \frac{k+l-1}{b-1} 
		\right) \frac{a-1}{k} \biggr]^+
		 \leq \biggl[ 1 - \frac{a-1+l-1}{b-1} \biggr]^+ \nonumber\\
		&\qquad\null \leq \biggl[ \frac{b-a - (l-2)}{b} \biggr]^+. 
		\label{ingr2.2a}
	\end{align}
	The upper bound~\eqref{ingr2.2a} also holds for $k=a$.
	
	Next we define a pure birth process~$X_t$, $t=0,1,\dotsc$ with the 
	properties that (i) $X_0 \geq \floor{0.01(b-a)}$, and (ii) when their 
	heights are the same, the birth process~$X_t$ increases with a higher 
	probability than the process of differences $Z^b_{b-a+i}-Y^a_i$. To define 
	the process~$X_t$, let
	\[
		X_0 = x_0 := \max\{ 2, \floor{0.01 (b-a)} \},
	\]
	and let the dynamics of~$X_t$ be given by
	\[\begin{split}
		\P(X_{t+1} = X_t + 1 \mid X_t = x_0 + x)
		&= 1 - \P(X_{t+1} = X_t \mid X_t = x_0 + x) \\
		&= \frac{b-a-x}{b}
	\end{split}\]
	for $x = 0,1,\dots,b-a$. Since the two processes can be coupled in such a 
	way that on the event $\{Z^b_{b-a} - a \leq 0.01(b-a)\}$, the birth 
	process~$X_t$ dominates the process of differences, we have that
	\begin{multline}
		\label{ingr2.3}
		\P\bigl( Z^b_b - Y^a_a > \tfrac{1}{2} (b-a) \bigm| Z^b_{b-a} - a \leq 
		0.01 (b-a) \bigr) \\
		\leq \P\bigl( X_{t_0} > \tfrac{1}{2} (b-a) \bigr) + \P\bigl( Y^a_a < 
		\tfrac{a}{e} - (\tfrac{a}{e})^{5/6} \bigr),
	\end{multline}
	where
	\[
		t_0 = \floor[\big]{a - \bigl( \tfrac{a}{e} - ( \tfrac{a}{e} )^{5/6} 
		\bigr)}.
	\]

	To bound the first term on the right in~\eqref{ingr2.3}, we use a 
	continuous-time version of the process~$X_t$. Let $T_x$, $x = 0, 1, \dots, 
	b-a$, be independent such that $T_x$ has the geometric distribution with 
	parameter $p = (b-a-x)/b$ (where $T_{b-a}=\infty$ with probability~1). We 
	can then write
	\[
		X_t = x_0 + \min \{ x\colon T_0 + \dots + T_x > t \},
	\]
	Now let $T_x'$, $x=0,1,\dots,b-a$, be independent such that $T_x'$ has the 
	exponential distribution with parameter
	\[
		\lambda = \log \frac{b}{a} - x \frac{\log b - \log a}{b-a}.
	\]
	We define the continuous-time process~$X_t'$, $t\geq 0$, by
	\[
		X_t' = x_0 +  \min\{ x\colon T_0'+\dots+T_x' > t \}.
	\]
	Since
	\[\begin{split}
		\P(T_x > t)
		&= \left( 1 - \frac{b-a-x}{b} \right)^{\floor{t}}
		 \geq e^{-t \bigl[ -\log\bigl( 1 - \frac{b-a-x}{b} \bigr) \bigr] } \\
		&\geq e^{-t \bigl[ \frac{b-a-x}{b} \sum_{k=1}^{\infty} \frac{1}{k} 
		(\frac{b-a}{b})^{k-1} \bigr] }
		= e^{-t \bigl[ \frac{b-a-x}{b-a} \log \frac{b}{a} \bigr] }
		= \P(T_x' > t),
	\end{split}\]
	we have that $T_x'$ is stochastically less than~$T_x$. It follows that 
	$X_t$ is stochastically dominated by~$X_t'$, hence
	\begin{equation}
		\label{ingr2.4}
		\P\bigl( X_{t_0} > \tfrac{1}{2} (b-a) \bigr)
		\leq  \P\bigl( X_{t_0}' > \tfrac{1}{2} (b-a) \bigr).
	\end{equation}

	By Lemma~\ref{lem:binomial}, $X'_{t_0} - x_0$ has the same law as a random 
	variable~$W'$ having the binomial distribution with parameters
	\[
		n' = b-a,\qquad
		p' = 1 - \exp\left( - \frac{\log b-\log a}{b-a} t_0 \right).
	\]
	We have, as $a\to\infty$,
	\[
		p'
		\leq 1 - \exp\bigl( -\tfrac{1}{a} \floor[\big]{a - \tfrac{a}{e} + 
		(\tfrac{a}{e})^{5/6}} \bigr)
		\to 1 - e^{-( 1 - e^{-1} )} \approx 0.4685.
	\]
	Therefore, if $0.01(b-a)\geq2$ and $a$ is sufficiently large, then using 
	Hoeffding's inequality we can bound $\P\bigl( X_{t_0}' > \tfrac{1}{2} 
	(b-a) \bigr)$ above by
	\[
		\P\bigl( W' > 0.49 (b-a) \bigr)
		\leq \exp \bigl[ -2(b-a) (0.49 - p')^2 \bigr]
		\leq e^{-c_6 (b-a)}
	\]
	for some constant $c_6 > 0$ that does not depend on~$a,b$. Hence
	\begin{equation}
		\label{ingr2.5}
		\P\bigl( X_{t_0}' > \tfrac{1}{2} (b-a) \bigr)
		\leq e^{200 c_6} e^{-c_6 (b-a)}
	\end{equation}
	for all values of~$b-a$ and sufficiently large~$a$.

	By Corollary~\ref{cor:tailY} and the assumption that $b -a \leq a^{2/3}$, 
	the second term on the right in~\eqref{ingr2.3} satisfies
	\begin{equation}
		\label{ingr2.6}
		\P\bigl( Y^a_a < \tfrac{a}{e} - (\tfrac{a}{e})^{5/6} \bigr)
		\leq e^{-c_7 a^{2/3}}
		\leq e^{-c_7 (b-a)},
	\end{equation}
	for some constant $c_7>0$ that does not depend on $a,b$. Combining 
	\eqref{ingr2.3}, \eqref{ingr2.4}, \eqref{ingr2.5} and~\eqref{ingr2.6} 
	gives~\eqref{ingr2.2}. Since $Y_a^a \leq S_a$ and $S_b \leq Z^b_b$, 
	\eqref{ingr2.1} and~\eqref{ingr2.2} imply the statement of the lemma.
\end{proof}

\subsection{Proof of Propositions \ref{prop:ingr3}, \ref{prop:ingr2} 
and~\ref{prop:ingr1}}
\label{sec:proofpropositions}

\begin{proof}[Proof of Proposition~\ref{prop:ingr3}.]
	The proposition is a corollary of Lemma~\ref{lem:intervals} applied for a 
	specific sequence of intervals, as we explain below. We define, for every 
	$x\geq 0$, a sequence of intervals $I_k(x)$, $k\geq 0$, as follows. Let
	\[
		\delta_x = \tfrac{1}{12} e^{-\frac{1}{3}x},
	\]
	and let $I_0(x) = [ \floor{I_0^-(x)}, \ceil{I_0^+(x)} ]$ with
	\[
		I_0^-(x) = e^{x-2\delta_x},\qquad I_0^+(x) = e^{x+2\delta_x}.
	\]
	Let $s_0$ be defined by~\eqref{s_0}, let $\gamma=1/4$ and let $c_0 = 
	c_0(x)$ be given by~\eqref{c_0}, i.e.,
	\[
		c_0(x) = \frac{1}{4s_0} e^{\frac{1}{2}x-\frac{1}{2}} (e^{\delta_x} - 
		e^{-\delta_x}).
	\]
	For $k\geq 1$, we define the interval~$I_k(x)$ by \eqref{left} 
	and~\eqref{right}, with $I_0(x)$ and $c_0=c_0(x)$ as above, i.e., $I_k(x) 
	= [ \floor{I_k^-(x)}, \ceil{I_k^+(x)} ]$ with
	\begin{align*}
		I_k^-(x)
		&= e^{x+k-2\delta_x} \Bigl( 1 + c_0(x) e^{\frac{1}{2} - \frac{1}{2}x + 
		\delta_x} \sum\nolimits_{i=1}^k \sqrt{i} \, e^{-i/2} \Bigr), \\
		I_k^+(x)
		&= e^{x+k+2\delta_x} \Bigl( 1 - c_0(x) e^{\frac{1}{2} - \frac{1}{2}x - 
		\delta_x} \sum\nolimits_{i=1}^k \sqrt{i} \, e^{-i/2} \Bigr).
	\end{align*}

	We will apply Lemma~\ref{lem:intervals} for the sequence of 
	intervals~$I_k(x)$ defined above. Since $e^{u} - e^{-u} = 2u + O(u^3)$ as 
	$u\downarrow 0$, for our choice of intervals we have
	\[
		c_0(x)
		= \frac{1}{24 s_0} e^{\frac16 x - \frac12}
			+ O\bigl( e^{-\frac12 x} \bigr)
		\to \infty \text{ as $x\to\infty$},
	\]
	hence the right hand side of~\eqref{boundintervals} tends to~0 as $x\to 
	\infty$. Now let $\varepsilon>0$ and~$a_2$ be given. By 
	Lemma~\ref{lem:intervals}, there exists $k_0 > 1+\log a_2$ such that
	\begin{equation}
		\label{ingr3.1a}
		\inf_{x\in[k_0,k_0+1]} \P\bigl( \text{for all $\textstyle n \in 
		\bigcup_{k=1}^{\infty} I_k(x)$}, X^n_{k_n} \in I_0(x) \bigr)
		\geq 1-\varepsilon.
	\end{equation}
	Choose
	\[
		\delta := \delta_{k_0+1} = \tfrac{1}{12} e^{-\frac{1}{3}(k_0+1)}.
	\]
	We will prove that for all $x\in [k_0,k_0+1]$,
	\begin{align}
		\label{ingr3.1b}
		I_k(x)
		&\supset [e^{x+k-\delta}, e^{x+k+\delta} ]
			\text{ for all $k\geq 1$}, \\
		\label{ingr3.1c}
		I_0(x)
		&\subset [e^{x-3\delta}, e^{x-3\delta} + (e^{x-3\delta})^{2/3} ].
	\end{align}
	Together, \eqref{ingr3.1a}, \eqref{ingr3.1b} and~\eqref{ingr3.1c} imply 
	the statement of the proposition, where $x$ plays the role of $k_0+w$ in 
	the proposition.

	To prove \eqref{ingr3.1b} and~\eqref{ingr3.1c}, let $x\in [k_0,k_0+1]$. 
	The inclusion~\eqref{ingr3.1b} follows from the observations that for all 
	$k\geq 1$,
	\begin{align*}
		\floor{I_k^-(x)}
		&\leq e^{x+k} \bigl( e^{-2\delta_x} + c_0(x) s_0 e^{-\frac12 x - 
		\delta_x + \frac12} \bigr)
		\leq e^{x+k} \bigl( \tfrac34 e^{-2\delta} + \tfrac14 \bigr)
		\leq e^{x+k-\delta}, \\
		\intertext{where we have used in the last two steps that $\delta \leq 
		\delta_x \leq 1/12$, and likewise}
		\ceil{I_k^+(x)}
		&\geq e^{x+k} \bigl( e^{2\delta_x\phantom+} - c_0(x) s_0 e^{-\frac12 x 
		+ \delta_x +\frac12} \bigr)
		\geq e^{x+k} \bigl( \tfrac34 e^{2\delta\phantom+} + \tfrac14 \bigr)
		\geq e^{x+k+\delta}.
	\end{align*}
	Next we prove the inclusion~\eqref{ingr3.1c}. Since $\tfrac14 e^{-1/3} > 
	\tfrac16$, for $k_0$ sufficiently large we have that
	\[
		\floor{I_0^-(x)}
		= \floor[\big]{\exp\bigl( x - \tfrac16 e^{-\frac{1}{3} x} \bigr)}
		\geq \exp \bigl( x - \tfrac14 e^{-\frac13(k_0+1)} \bigr)
		= e^{x-3\delta},
	\]
	and similarly $\ceil{I_0^+(x)} \leq e^{x+3\delta}$. Moreover, using the 
	inequalities $e^{1-6\delta} \geq 1-6\delta \geq 1-\tfrac12 e^{-x/3}$ we 
	obtain
	\[
		e^{x-6\delta} + e^{\frac23 x-5\delta}
		\geq \bigl( e^x + e^{\frac23 x} \bigr) (1-6\delta)
		\geq e^x + \tfrac12 e^{\frac23 x} - \tfrac12 e^{\frac13 x}
		\geq e^x,
	\]
	from which it follows that
	\[
		 e^{x-3\delta} + \bigl( e^{x-3\delta} \bigr)^{2/3}
		 \geq e^{x+3\delta}.
	\]
	This proves~\eqref{ingr3.1c}, and completes the proof of the 
	proposition.
\end{proof}

\begin{proof}[Proof of Proposition~\ref{prop:ingr2}.]
	The idea is to repeatedly apply Lemma~\ref{lem:ingr2} to the coupled 
	processes $X^a_i$ and~$X^b_i$, with $a < b\leq a+a^{2/3}$, until the 
	distance $X^b_i-X^a_i$ has decreased to a constant. To this end, however, 
	$X^a_i$ and~$X^b_i$ should satisfy the conditions of Lemma~\ref{lem:ingr2} 
	at each round, and this requires that we first strengthen the statement of 
	the lemma somewhat.

	Let $S_n$, $n\geq 2$, be coupled as in Section~\ref{sec:coupling}. By 
	Lemma~\ref{lem:coupling} and Corollary~\ref{cor:tailY}, and since 
	$\tfrac4{11} < e^{-1}$, there exists $c_0>0$ such that for all~$a$,
	\begin{equation}
		\label{cor2.0}
		\P\bigl( S_a \geq \tfrac{4}{11} a \bigr) \geq 1 - e^{-c_0 a}.
	\end{equation}
	Next, note that it is a deterministic fact that if $S_b-S_a \leq 
	\tfrac12(b-a)$, $b-a \leq a^{2/3}$ and $S_a \geq \tfrac{4}{11} a$, then
	\begin{equation}
		\label{cor2.01}
		S_b-S_a
		\leq \tfrac12 a^{2/3}
		\leq \tfrac12 \bigl( \tfrac{11}{4} S_a \bigr)^{2/3}
		\leq S_a^{2/3}.
	\end{equation}
	By \eqref{cor2.0}, \eqref{cor2.01} and Lemma~\ref{lem:ingr2}, there exist 
	$a_0^*,c_1,c_2 >0$ such that, for all~$a,b$ with $a_0^*\leq a < b\leq 
	a+a^{2/3}$,
	\begin{equation}
		\label{cor2.1}
		\P\bigl( S_b - S_a \leq \min\bigl\{ \tfrac12 (b-a), S_a^{2/3} \bigr\}, 
		S_a \geq \tfrac{4}{11}a \bigr)
		\geq 1 - c_1 e^{-c_2 (b-a)}.
	\end{equation}
	The additional statements that $S_b-S_a\leq S_a^{2/3}$ and $S_a \geq 
	\tfrac{4}{11}a$ in~\eqref{cor2.1} make this version of the statement of 
	Lemma~\ref{lem:ingr2} suitable for repeated application to the coupled 
	processes $X^a_i$ and~$X^b_i$.

	Let $\varepsilon>0$. Define $a_0 := (a_0^*)^{100}$ and~$d$ such that 
	$\sum_{k=d}^{\infty} c_1 \exp(-c_2 k) \leq \varepsilon$, let $a,b$ be such 
	that $a_0\leq a<b\leq a+a^{2/3}$, and let $T_0 = \inf\{ i\colon X_i^b - 
	X_i^a \leq d \}$. We claim that
	\begin{multline}
		\label{cor2.2}
		\P\bigl( \text{$-1 \leq X^b_i - X^a_i \leq d$ and $X_i^a \geq 
		a^{0.01}$ for some~$i$} \bigr) \\
		\geq \P\bigl( \text{$X_{i+1}^b - X_{i+1}^a \leq \tfrac{1}{2} (X_i^b - 
		X_i^a)$, $X_{i+1}^a \geq \tfrac{4}{11} X_i^a$ for all $i < T_0$} 
		\bigr).
	\end{multline}
	Indeed, the following three facts together imply~\eqref{cor2.2}:
	\begin{enumerate}
		\item If $X^b_{i+1} - X^a_{i+1} \leq \tfrac12 (X^b_i - X^a_i)$ for 
			every $i<0.97\log a$, then $X^b_i - X^a_i \leq d$ for some $i\leq 
			0.97\log a$ (and hence $T_0 \leq 0.97\log a$), since
			\[
				(b-a) \bigl( \tfrac{1}{2} \bigr)^{0.97\log a-1}
				\leq 2a^{2/3} \bigl( \tfrac{1}{2} \bigr)^{0.97\log a}
				\leq 2 \leq d.
			\]
		\item If $X_{i+1}^a \geq \tfrac{4}{11} X_i^a$ in every round 
			$i<0.97\log a$, then $X^a_i \geq a^{0.01}$ for all $i\leq 0.97\log 
			a$, since
			\[
				a \bigl( \tfrac{4}{11} \bigr)^{0.97 \log a}
				\geq a^{0.01}.
			\]
		\item If $X_i^b - X_i^a > 0$, then $X^b_{i+1} - X^a_{i+1} \geq -1$ 
			a.s.\ by \eqref{Xni} and Lemma~\ref{lem:coupling}.
	\end{enumerate}

	The right hand side of~\eqref{cor2.2} is at least
	\begin{multline}
		\P\bigl( X_{i+1}^b - X_{i+1}^a \leq \min\{\tfrac{1}{2} (X_i^b - 
		X_i^a),X_{i+1}^a)^{2/3}\} \\
		\text{and $X_{i+1}^a \geq \tfrac{4}{11} X_i^a$ for all $i < T_0$} 
		\bigr)
		\geq 1-\sum_{k=d}^{\infty} c_1 e^{-c_2 k}
		\geq 1-\varepsilon, \label{cor2.3}
	\end{multline}
	where the first bound on the probability follows from repeated application 
	of~\eqref{cor2.1}. Note that on the event considered in~\eqref{cor2.3} we 
	have that $X_i^a \geq a^{0.01} \geq a_0^{0.01} \geq a_0^*$ for all 
	$i<T_0$, so that we can indeed apply~\eqref{cor2.1}. The sum 
	in~\eqref{cor2.3} is over all possible values that the distance 
	$X^b_i-X^a_i$ can assume, and all larger values. The second inequality 
	in~\eqref{cor2.3} follows from the definition of~$d$. Combining 
	\eqref{cor2.2} and~\eqref{cor2.3} yields the desired result.
\end{proof}

\begin{proof}[Proof of Proposition~\ref{prop:ingr1}.]
	Let the $S_n$, $n\geq 2$, be coupled as in Section~\ref{sec:coupling}. By 
	Lemma~\ref{lem:coupling} and Corollary~\ref{cor:tailY}, and since 
	$\tfrac4{11} < e^{-1}$, there exists $c_0>0$ such that for all~$a$,
	\begin{equation}
		\label{cor1.0}
		\P\bigl( S_a \geq \tfrac{4}{11} a \bigr) \geq 1 - e^{-c_0 a}.
	\end{equation}
	By Lemma~\ref{lem:ingr2}, there exist $a_0,c_1,c_2 >0$ such that for 
	all~$a,b$ satisfying $a_0\leq a < b\leq a+a^{2/3}$,
	\begin{equation}
		\label{cor1.1}
		\P\bigl( S_b - S_a \leq \tfrac12 (b-a) \bigr)
		\geq 1 - c_1 e^{-c_2 (b-a)}.
	\end{equation}

	Let $\varepsilon>0$ and~$d$ be given. Choose $d_0\geq d$ such that
	\begin{equation}
		\label{defd0}
		e^{-d_0} \leq \frac{\varepsilon}{3}
		\quad\text{and}\quad
		d_0 c_1^{d_0} e^{14 d_0-c_2 d_0^2}
		\leq \frac{\varepsilon}{3},
	\end{equation}
	and define $T := d_0 \ceil{\exp(14d_0)}$. Now choose $a_1^*$ such that
	\begin{equation}
		\label{defa1}
		2T e^{-c_0a_1^*} \leq \frac{\varepsilon}{3}
		\quad\text{and}\quad
		a_1^* \geq \max\bigl\{ a_0,(2d_0)^{3/2},8d_0 \bigr\},
	\end{equation}
	and set $a_1 := a_1^*(11/4)^T$. Let $a,b$ be such that $a_1\leq a < b \leq 
	a+d$, and consider the coupled processes $X^a_i$ and~$X^b_i$. Define the 
	events
	\begin{align*}
		A_k &:= \bigl\{ \text{$X^a_i, X^b_i \geq a_1^*$ for all $i\leq k$} 
		\bigr\}, \\
		B_k &:= \bigl\{ \text{$\abs{X^b_i - X^a_i}\leq 2d_0$ for all $i\leq 
		k$} \bigr\}, \\
		C_k &:= \bigl\{ \text{$\abs{X^b_i - X^a_i}\neq 0$ for all $i\leq k$} 
		\bigr\}.
	\end{align*}
	Our goal is to show that~$C_T$ has small probability, which implies that 
	with high probability, $\abs{X^b_i-X^a_i} = 0$ for some~$i$. Since
	\begin{equation}
		\label{cor1.2}
		\P(C_T) \leq \P(A_T^c) + \P(A_T\cap B_T^c) + P(A_T\cap B_T\cap C_T),
	\end{equation}
	it suffices to prove that $\P(A_T^c)$, $\P(A_T\cap B_T^c)$ and $\P(A_T\cap 
	B_T\cap C_T)$ are small.
	
	We start with~$\P(A_T^c)$. Observe that it is a deterministic fact that if 
	$a\geq a_1$ and $X^a_i \geq \tfrac4{11} X^a_{i-1}$ for all $i\leq T$, then 
	by definition of~$a_1$, $X^a_i \geq a_1^*$ for all $i\leq T$. Hence, by 
	\eqref{cor1.0} and~\eqref{defa1},
	\begin{equation}
		\label{cor1.3}
		\P(A_T^c) \leq 2T e^{-c_0a_1^*} \leq \frac{\varepsilon}{3}.
	\end{equation}
	
	Next, we turn to $\P(A_T\cap B_T^c)$. Observe that we can consider the 
	absolute differences $\abs{X^b_i - X^a_i}$, $i = 0,1,\dotsc$, as a random 
	walk starting at $b-a \leq d_0$. By the definition~\eqref{Xni} of 
	the~$X^n_i$ and Lemma~\ref{lem:coupling},
	\[
		\abs{X^b_{i+1} - X^a_{i+1}} \leq \abs{X^b_i - X^a_i} + 1
		\text{ a.s.\ for all $i \geq 0$}.
	\]
	This implies that it can only be the case that $\abs{X^b_i-X^a_i} > 2d_0$ 
	for some $i\leq T$, if there exists a $k<T-d_0$ such that $d_0\leq 
	\abs{X^b_{k+j}-X^a_{k+j}} \leq d_0+j$ holds for $j = 0,1,2,\dotsc,d_0$. 
	Hence, if we introduce the notation
	\[
		D_{k,i} := A_{k+i} \cap \bigl\{ \text{$d_0\leq 
		\abs{X^b_{k+j}-X^a_{k+j}}\leq d_0+j$ for $j=0,1,\dots,i$} \bigr\},
	\]
	then we have that $\P(A_T\cap B_T^c) \leq \sum_{k<T-d_0} \P(D_{k,d_0})$, 
	which is the same as
	\begin{equation}
		\label{cor1.4}
		\P(A_T\cap B_T^c)
		\leq \sum_{k=0}^{T-d_0-1} \prod_{i=1}^{d_0} \P(D_{k,i} \mid D_{k,i-1}) 
		\cdot \P(D_{k,0}).
	\end{equation}
	
	Now, by~\eqref{defa1}, if $X_i^a,X_i^b \geq a_1^*$ and $\abs{X_i^b - 
	X_i^a} \leq 2d_0$, then it also holds that ${\abs{X^b_i - X^a_i}} \leq 
	\min\{ X^a_i,X^b_i \}^{2/3}$. Moreover, if the absolute difference 
	$\abs{X^b_i-X^a_i}$ is strictly less than~$2d_0$, it will drop below~$d_0$ 
	if it decreases by at least $\tfrac12\abs{X^b_i-X^a_i}$ at the next step. 
	Therefore, it follows from~\eqref{cor1.1} that
	\begin{equation}
		\label{cor1.5}
		\P(D_{k,i} \mid D_{k,i-1})
		\leq \P\bigl( \text{$\abs{X^b_{k+i}-X^a_{k+i}}\geq d_0$} \bigm| 
		D_{k,i-1} \bigr)
		\leq c_1 e^{-c_2 d_0}
	\end{equation}
	for $i\leq d_0$. Together, \eqref{cor1.4}, \eqref{cor1.5} 
	and~\eqref{defd0} give
	\begin{equation}
		\label{cor1.6}
		\P\bigl( A_T\cap B_T^c \bigr)
		\leq (T-d_0) \bigl( c_1e^{-c_2 d_0} \bigr)^{d_0}
		\leq d_0 c_1^{d_0} e^{14 d_0-c_2 d_0^2}
		\leq \frac{\varepsilon}{3}.
	\end{equation}

	Finally, we consider $\P(A_T\cap B_T\cap C_T)$. To simplify the notation, 
	write $E_i = A_i\cap B_i\cap C_i$ for $i\geq0$. Then we have
	\[
		\P(E_T)
		= \prod_{i=1}^T \P(E_i \mid E_{i-1}) \P(E_0)
		\leq \prod_{i=1}^T \P(C_i\mid E_{i-1}).
	\]
	Since on the event~$E_i$, $\abs{X^b_i-X^a_i} \leq 2d_0$ and $2d_0 \leq 
	\tfrac14 a_1^*\leq \tfrac14\min\{X^b_i,X^a_i\}$, by Lemma~\ref{lem:ingr1} 
	each factor in this product is bounded above by $1-e^{-14d_0}$. Hence, 
	using the inequality $1-u \leq e^{-u}$ and~\eqref{defd0},
	\begin{equation}
		\label{cor1.7}
		\P(A_T\cap B_T\cap C_T)
		= \P(E_T)
		\leq \bigl(1-e^{-14d_0}\bigr)^T
		\leq e^{-d_0}
		\leq \frac{\varepsilon}{3}.
	\end{equation}
	Combining \eqref{cor1.2}, \eqref{cor1.3}, \eqref{cor1.6}, 
	and~\eqref{cor1.7} gives
	\[
		\P\bigl( \text{$\abs{X^b_i-X^a_i}=0$ for some $i\leq T$} \bigr)
		= 1-P(C_T)
		\geq 1-\varepsilon.\qedhere
	\]
\end{proof}

\section*{}
\subsection*{Acknowledgment:}

We thank Henk Tijms for drawing our attention to the group Russian roulette 
problem.

\bibliographystyle{amsplain}
\bibliography{shooting}

\end{document}